\documentclass[]{article}
\usepackage{amsmath,amssymb,amsthm}
\usepackage[latin1]{inputenc}
\usepackage{amsfonts}
\usepackage[usenames,dvipsnames]{pstricks}
\usepackage[all]{xy}
\usepackage{hyperref}
\usepackage{tikz}
\textwidth 6.5in \textheight 8.5in \footskip 0.5in
\parskip 0.15in \oddsidemargin 0in \evensidemargin 0in
\topmargin -0.2in

\newtheorem{theorem}{Theorem}[section]

\newtheorem{proposition}[theorem]{Proposition}

\theoremstyle{definition}
\newtheorem{example}[theorem]{Example}
\newtheorem{definition}[theorem]{Definition}

\newtheorem*{remark}{Remark}

\begin{document}
\title{\bf Properties of  Bipolar Fuzzy Hypergraphs } \normalsize
\author{{\bf M. Akram$^{\bf a}$ ,  W.A. Dudek$^{\bf b}$ \, and \ S. Sarwar$^{\bf c}$} \\
{\small {\bf a.}  Punjab University College of Information
Technology, University of the Punjab,}\\
{\small  Old Campus, Lahore-54000, Pakistan.}\\
  {\small E-mail: makrammath@yahoo.com,
  ~~~~m.akram@pucit.edu.pk}\\
    {\small {\bf b.}  Institute of Mathematics and Computer Science,  Wroclaw University
of Technology,}\\
 {\small  Wyb. Wyspianskiego 27, 50-370,Wroclaw, Poland.}\\
  {\small E-mail: Wieslaw.Dudek@pwr.wroc.pl }\\
    {\small {\bf c.}  Punjab University College of Information
Technology, University of the Punjab,}\\
{\small  Old Campus, Lahore-54000, Pakistan.}\\
  {\small E-mail:  s.sarwar@pucit.edu.pk } }
 \maketitle
 \hrule
\begin{abstract}
In this article,
 we apply the concept of bipolar fuzzy sets to hypergraphs and investigate some properties of bipolar fuzzy hypergraphs.  We introduce the notion of  $A-$ tempered bipolar fuzzy hypergraphs and present some of their properties. We also present application examples of  bipolar  fuzzy hypergraphs.
\end{abstract}
 {\bf Keywords}: Bipolar fuzzy hypergraph,  bipolar fuzzy partition, dual bipolar fuzzy hypergraph, $A-$ tempered bipolar fuzzy hypergraphs, clustering problem.\\
 {\bf  Mathematics Subject Classification 2010}: 05C99\\\\
\hrule
\section{Introduction}
  In 1994, Zhang \cite{ZW1}  initiated  the concept of  bipolar fuzzy sets   as a generalization of fuzzy sets \cite{LA}. Bipolar fuzzy sets are an extension of fuzzy sets whose membership degree range is  $[-1, 1]$. In a bipolar fuzzy set, the membership
degree $0$ of an element means that the element is irrelevant to
the corresponding property, the membership degree $(0,1]$ of an
element indicates that the element somewhat satisfies the
property, and the membership degree $[-1,0)$ of an element
indicates that the element somewhat satisfies the implicit
counter-property. Although bipolar fuzzy sets and intuitionistic
fuzzy sets look similar to each other, they are essentially
different  sets \cite{Lee2}. In many domains, it is important to be able to deal with
bipolar information. It is noted that positive information
represents what is granted to be possible, while negative
information represents what is considered to be impossible. This
domain has recently motivated new research in several directions.
In particular, fuzzy and possibilistic formalisms for bipolar
information have been  proposed \cite{DSH}, because when
we deal with spatial information in image processing or in spatial
reasoning applications, this bipolarity also occurs. For instance,
when we assess the position of an object in a space, we may have
positive information expressed as a set of possible places and
negative information expressed as a set of impossible places. \\
At present, graph theoretical concepts are highly utilized by computer science applications. Especially in research areas of computer science including data mining, image segmentation, clustering, image capturing and networking, for example a data structure can be designed in the form of tree which in turn utilized vertices and edges. Similarly,  modeling of network topologies can be done using graph concepts. In the same way the most important concept of graph coloring is utilized in resource allocation, scheduling. Also, paths, walks and circuits in graph theory are used in tremendous applications say traveling salesman problem, database design concepts, resource networking. This leads to the development of new algorithms and new theorems that can be used in tremendous applications.
Hypergraphs are the generalization of graphs (cf. \cite{CB}) in case of set of multiarity relations. It means the expansion of graph models for the modeling complex systems. In case of modeling systems with fuzzy binary and multiarity relations between objects, transition to fuzzy hypergraphs, which combine advantages both fuzzy and graph models, is more natural. It allows to realise formal optimization and logical procedures. However, using of the fuzzy graphs and hypergraphs as the models of various systems (social, economic systems, communication networks and others) leads to difficulties. The graph isomorphic transformations are reduced to redefinition of vertices and edges. This redefinition does not change properties the graph determined by an adjacent and an incidence of its vertices and edges. Fuzzy independent set, domination fuzzy set, fuzzy chromatic set are invariants concerning the isomorphism transformations of the fuzzy graphs and fuzzy hypergraph and allow make theirs structural analysis \cite{CB1}.
 Lee-kwang {\it et al.} \cite{Lee} generalized and redefined the concept of fuzzy hypergraphs whose basic idea was given by Kaufmann \cite{aa}. Further, the concept of fuzzy  hypergraphs was discussed in \cite{RH}.                                                                     Chen \cite{cc} introduced  the concept of interval-valued fuzzy hypergraphs. Parvathi {\it et al.}\cite{RM} introduced the concept of  intuitionistic fuzzy hypergraphs.  Samanta and  Pal [20] introduced the concept of  a bipolar fuzzy hypergraph  and studied some of its elementary properties.  In this article,  we first
           investigate some interesting  properties of bipolar fuzzy hypergraphs. We introduce the regularity of bipolar fuzzy hypergraphs.  We then introduce the notion of  $A-$ tempered bipolar fuzzy hypergraphs and present some of their properties. Finally, we  present an example of a bipolar  fuzzy partition on the digital image processing.  \\
           We  used standard  definitions and terminologies in this
paper. For  notations, terminologies and applications are not
mentioned  in the paper, the readers are referred to [1-9].

\section{Preliminaries}
A {\it hypergraph} is a pair $H^* = (V, E^*$), where $V$ is a finite set of nodes (vertices) and $E^*$ is a set of edges (or hyperedges) which are arbitrary nonempty subsets of $V$  such that $\bigcup_j E^*_j=V$.
A hypergraph is a generalization of an ordinary undirected graph, such that an edge need
not contain exactly two nodes, but can instead contain an arbitrary nonzero number of vertices.
An ordinary undirected graph (without self-loops) is, of course, a hypergraph where every edge
has exactly two nodes (vertices). A hypergraph is {\it simple} if there are no repeated edges and no edge properly contains another. Hypergraphs are often defined by an incidence matrix with columns indexed by
the edge set and rows indexed by the vertex set. The {\it rank} $r(H)$ of a hypergraph is defined as the
maximum number of nodes in one edge, $r(H)$ =$ \max_j( |Ej |)$, and the {\it anti-rank}
$s(H)$ is defined likewise, i.e., $s(H)$ =$ \min_j(|Ej|)$. We say that a hypergraph is {\it uniform} if $r(H)$ = $s(H)$. A uniform hypergraph of rank $k$ is called $k$-uniform hypergraph. Hence a simple
graph is a 2-uniform hypergraph, and thus all simple graphs are also hypergraphs. A hypergraph is {\it vertex}
(resp. {\it hyperedge}) {\it symmetric} if for any two vertices (resp. hyperedges) $v_i$ and $v_j$ (resp. $e_i$ and $e_j$), there is an automorphism of the hypergraph that maps $v_i$ to $v_j$ (resp. $e_i$ to $e_j$).
The {\it dual of a hypergraph} $H^* = (V,E^*)$ with vertex set $V = \{v_1, v_2, \ldots, v_n \}$ and hyperedge set $E^* =\{e^*_1, e^*_2, \ldots, e^*_m \}$ is a hypergraph $H^d = (V^d,
 E^*{^d})$ with vertex set $V^d = \{v^d_1, v^d_2, \ldots, v^d_m \}$ and hyperedge set
$E^d =\{(e^*_1)^d, (e^*_2)^d, \ldots, (e^*_n)^d \}$ such that $v^d_ j$ corresponds to $e^*_j$
with hyperedges $(e^*_i)^d = \{v^d_j ~|~v_i \in e^*_j$ and $e^*_j \in E^* \}$. In
other words, $H^d$ is obtained from $H^*$ by interchanging
of vertices and hyperedges in $H^*$. The incidence matrix
of $H^d$ is the transpose of the incidence matrix of $H^*$.Thus,  $(H^d)^d=H^*$.
\begin{definition}\cite{LA, LA1}
  A {\it fuzzy set} $\mu$ on a nonempty set $X$ is a map $\mu:X\to [0,1]$. In the clustering, the fuzzy set $\mu$, is called a {\it fuzzy class}. We define the
{\it support} of $\mu$ by supp ($\mu) = \{x \in X ~|~ \mu(x)\neq 0 \}$ and say $\mu$ is nontrivial if supp($\mu$)
is nonempty. The {\it height} of $\mu$ is $h(\mu)= \max \{\mu(x)~|~x \in X \}$. We say $\mu$ is
{\it normal} if $h(\mu)$ =1. A map $\nu: X\times X\to [0,1]$ is called  a {\it
fuzzy relation} on $X$ if $\nu(x,y)\leq \min(\mu(x),\mu(y))$ for
all $x,y\in X$.  A {\it fuzzy partition} of a set $X$ is a family of nontrivial fuzzy sets $\{\mu_1, \mu_2, \mu_3, \ldots, \mu_m \}$ such that
\begin{itemize}
  \item [\rm (1)]  $\bigcup_i {\rm supp}(\mu_i)=X$, ~~ $i=1, 2, \ldots, m$
    \item [\rm (2)] $\sum^m_{i=1} \mu_i(x)=1$  for all $x \in X$.
\end{itemize}
 We call a family $\{\mu_1, \mu_2, \mu_3, \ldots, \mu_m \}$ a {\it fuzzy covering} of $X$ if it verifies only the above conditions (1) and (2).
\end{definition}
\begin{definition}\cite{Lee2}
Let $V$ be a finite set and let $E$ be a finite family of
nontrivial fuzzy sets on $V$ such that  $V = \bigcup_{j}  {\rm supp}(\mu_j)$, where $\mu_j$ is membership function
defined on  $E_j \in E$. Then the pair
$H=(V, E)$ is a {\it fuzzy hypergraph} on $V$, $E$ is the family of fuzzy edges of $H$
and $V$ is the (crisp) vertex set of $H$.
\end{definition}
\begin{definition}\cite{Lee1, ZW1}~Let $X$ be a nonempty set. A
{\sl bipolar fuzzy set} $B$ in $X$ is an object having the form
$$B =\{(x,\,\mu^P (x),\,\mu^N (x))\,|\,x\in X\}$$ where $\mu^P :X\rightarrow [0,\,1]$ and $\mu^N :X\rightarrow
[-1,\,0]$ are  mappings.
\end{definition}
\par We use the positive membership degree $\mu^P (x)$ to denote the
satisfaction degree of an element $x$ to the property corresponding to a bipolar fuzzy set $B$, and the negative membership degree $\mu^N (x)$ to denote the satisfaction degree of an element $x$ to some explicit or implicit property corresponding to a bipolar fuzzy set $B$. If $\mu^P (x)\not=0$ and $\mu^N (x)=0$, it is the situation that $x$ is regarded as having only positive satisfaction for $B$. If $\mu^P (x)=0$ and $\mu^N (x)\not=0$, it is the situation that $x$ does not satisfy the property of $B$ but somewhat satisfies the counter property of $B$ . It is possible for an element $x$ to be such that $\mu^P(x)\not=0$ and $\mu^N(x)\not=0$ when the membership function of the property overlaps that of its counter property over some portion of $X$.\\
For the sake of simplicity, we shall use the symbol $B =(\mu^P ,\,\mu^N )$ for the bipolar fuzzy set $B =\{(x,\,\mu^P (x),\,\mu^N (x))\,|\,x\in X\}$.
\begin{definition}\cite{ZW1}
Let $X$ be a nonempty set. Then we call a  mapping $A=(\mu^P_A,
\mu^N_A): X \times X \to [0, 1] \times [-1, 0]$ a {\it bipolar
fuzzy relation} on $X$ such that $\mu^P_A(x, y) \in [0, 1]$ and $\mu^N_A(x, y) \in [-1, 0]$.
\end{definition}
\begin{definition}{\rm \cite{Lee1}}
The {\it support} of  a bipolar fuzzy set $A=(\mu^P_A, \mu^N_A)$, denoted by supp($A$), is defined by
\[{\rm supp}(A)=supp^P(A) \cup supp^N(A),~ supp^P(A)= \{ x\,|\,\mu^P_A(x) > 0 \},~~ supp^N(A)= \{ x\,|\,\mu^N_A(x) < 0 \}.\]
We call supp$^P(A)$  as positive support and supp$^N(A)$  as negative support.
\end{definition}
\begin{definition}{\rm \cite{Lee1}}
Let $A=(\mu^P_A,\mu^N_A)$ be a bipolar fuzzy set on $X$ and let $\alpha \in [0, 1]$.   $\alpha$-cut $A_\alpha$ of $A$   can be  defined  as
 \[A_{\alpha}= A^P_{\alpha} \cup A^N_{\alpha},~ A^P_{\alpha}= \{x ~|~ \mu^P_\alpha(x)\ge \alpha \}, ~A^P_{\alpha}= \{x ~|~ \mu^N_\alpha(x)\le -\alpha \}.    \]
We call $A^P_{\alpha} $ as positive  $\alpha$-cut and $A^N_{\alpha} $ as negative  $\alpha$-cut.
\end{definition}
\begin{definition}
The {\it height} of a bipolar fuzzy set $A=(\mu^P_A, \mu^N_A)$ is  defined as $h(A)=  \max \{\mu^P_A(x) | x \in X\}.$ The {\it depth} of a bipolar fuzzy set $A=(\mu^P_A, \mu^N_A)$ is  defined as $d(A)=  \min \{\mu^N_A(x) | x \in X\}.$ We shall say that bipolar fuzzy set $A$ is
{\it normal}, if there is at least one $x \in X$ such that $\mu^P_A(x)$ =1 or $\mu^N_A(x)$ =$-1$.
\end{definition}

\section{ Bipolar fuzzy hypergraphs}
\begin{definition} {\rm \cite{SM}}
Let $V$ be a finite set and let $E=\{E_1, E_2, \ldots, E_m\}$  be a finite family of
nontrivial bipolar  fuzzy subsets of $V$ such that
\[
 V = \bigcup_{j}  {\rm supp}(\mu^P_j, \mu^N_j),~~ j=1, 2 \ldots, m,
\]
where  $\mu^P_j$ and $\mu^N_j$ are positive and negative  membership functions defined on $E_j \in E.$

Then the pair
$H=(V, E)$ is a {\it bipolar  fuzzy hypergraph} on $V$, $E$ is the family of bipolar  fuzzy edges of $H$
and $V$ is the (crisp) vertex set of $H.$ The order of $H$ (number of vertices) is
denoted by $|V|$ and the number of edges is denoted by $|E|$.
\end{definition}
Let $A=(\mu^P_A, \mu^N_A)$ be a bipolar fuzzy subset of $V$ and let $E$ be a collection of bipolar fuzzy
subsets of $V$ such that for each $B=(\mu^P_B, \mu^N_B) \in E$ and $x \in V$, $\mu^P_B(x)\leq \mu^P_A(x)$, $\mu^N_B(x)\geq \mu^N_A(x)$. Then the
pair $(A, B)$ is a bipolar fuzzy hypergraph on the bipolar fuzzy set $A$.
The bipolar fuzzy hypergraph $(A, B)$ is also a bipolar fuzzy hypergraph on $V$ = supp($A$),
the bipolar fuzzy set $A$ defines a condition for positive membership and negative membership in the edge set $E$. This condition can be stated separately, so without loss of generality
we restrict attention to bipolar fuzzy hypergraphs on crisp vertex sets.

\begin{example}
Consider a bipolar fuzzy hypergraph $H=(V, E)$ such that $V=\{a, b, c, d\}$ and $E=\{E_1, E_2, E_3\}$, where
\[
E_1=\{ \frac{a}{(0.2, -0.3)}, \frac{b}{(0.4, -0.5)}\},~E_2=\{  \frac{b}{(0.4, -0.5)}, \frac{c}{(0.5, -0.2)}\},~
E_3=\{ \frac{a}{(0.2, -0.3)}, \frac{d}{(0.2, -0.4)}\}.
\]
\begin{figure}[!h]
\centering

\scalebox{1} {
\begin{pspicture}(0,-2.4589062)(7.1028123,2.4189062)
\psellipse[linewidth=0.04,dimen=outer](5.2609377,0.14890625)(0.64,2.25)
\psellipse[linewidth=0.04,dimen=outer](1.6909375,0.14890625)(0.61,2.27)
\psellipse[linewidth=0.04,dimen=outer](3.4909375,1.54310626)(2.53,0.49)
\psdots[dotsize=0.12](1.5609375,1.54310626)
\psdots[dotsize=0.12](5.3609376,1.54310626)
\psdots[dotsize=0.12](5.3009377,-1.5610938)
\psdots[dotsize=0.12](1.5409375,-1.6010938)
\usefont{T1}{ptm}{m}{n}
\rput(6.592344,0.00890625){$E_2$}
\usefont{T1}{ptm}{m}{n}
\rput(0.43234375,0.10890625){$E_3$}
\usefont{T1}{ptm}{m}{n}
\rput(0.3423437,1.8089062){$a(0.2, -0.3)$}
\usefont{T1}{ptm}{m}{n}
\rput(0.12234375,-1.6310937){$d(0.2, -0.4)$}
\usefont{T1}{ptm}{m}{n}
\rput(3.6523438,2.2289062){$E_1$}
\usefont{T1}{ptm}{m}{n}
\rput(6.502344,2.1889062){$b(0.4, -0.5)$}
\usefont{T1}{ptm}{m}{n}
\rput(6.502344,-1.8710938){$c(0.5, -0.2)$}
\end{pspicture}
}
\caption{Bipolar fuzzy hypergraph}
\end{figure}
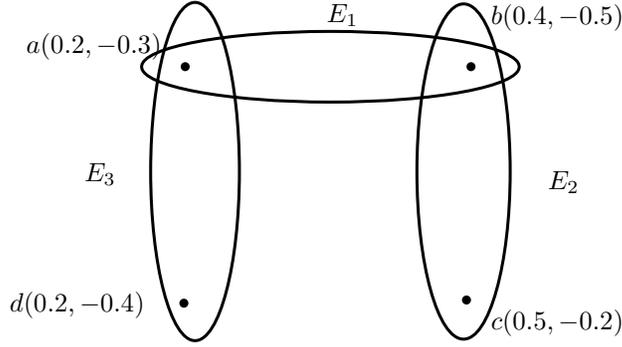

\begin{table}[!h]
\caption{The corresponding incidence matrix is given below: }
\centering
\medskip
\begin{tabular}{l|ccc}

$M_H$& $E_1$&$E_2$ &$E_3$\\
 \hline
a &$(0.2, -0.3)$ & (0, 0)&$(0.2, -0.3)$ \\
b& $(0.4, -0.5)$ & $(0.4, -0.5)$ & (0, 0)\\
c& (0, 0)& $(0.5, -0.2)$ &(0, 0)\\
d&(0, 0)&(0, 0)& $(0.2, -0.4)$\\
\end{tabular}
\end{table}

\end{example}

\begin{definition}

A bipolar fuzzy set $A=(\mu^P_A, \mu^N_A):X \to [0, 1] \times [-1, 0]$ is an {\it elementary  bipolar fuzzy set} if $A$ is single valued on  supp($A$).  An elementary  bipolar fuzzy hypergraph $H=(V, E)$ is a bipolar fuzzy hypergraph whose edges are  elementary.
\end{definition}
We explore the sense in which a bipolar fuzzy graph is a bipolar fuzzy hypergraph.
\begin{proposition}
Bipolar fuzzy graphs  are special cases of
the bipolar fuzzy hypergraphs.
\end{proposition}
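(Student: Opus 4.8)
The plan is to exhibit every bipolar fuzzy graph as a bipolar fuzzy hypergraph in which each hyperedge has support of size exactly two, mirroring the classical fact recalled in Section 2 that an ordinary graph is precisely a $2$-uniform hypergraph. Recall that a bipolar fuzzy graph on $V$ is a pair $(A,B)$ where $A=(\mu^P_A,\mu^N_A)$ is a bipolar fuzzy set on $V$ and $B=(\mu^P_B,\mu^N_B)$ is a bipolar fuzzy relation on $V$ satisfying $\mu^P_B(x,y)\le \min(\mu^P_A(x),\mu^P_A(y))$ and $\mu^N_B(x,y)\ge \max(\mu^N_A(x),\mu^N_A(y))$ for all $x,y\in V$. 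First I would fix such a pair $(A,B)$ and, for each unordered pair $\{x,y\}$ with $(\mu^P_B(x,y),\mu^N_B(x,y))\ne(0,0)$, define a bipolar fuzzy subset $E_{\{x,y\}}$ of $V$ that takes the value $(\mu^P_B(x,y),\mu^N_B(x,y))$ on both $x$ and $y$ and the value $(0,0)$ elsewhere; by construction its support is exactly $\{x,y\}$.

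Next I would collect these sets into the family $E=\{E_{\{x,y\}} \mid (\mu^P_B(x,y),\mu^N_B(x,y))\ne (0,0)\}$ and verify the requirements in the definition of a bipolar fuzzy hypergraph, namely finiteness of the family, nontriviality of each edge, and the covering condition. Each $E_{\{x,y\}}$ is a nontrivial bipolar fuzzy subset of $V$ because its support is nonempty; finiteness of $E$ follows from finiteness of $V$; and the covering condition $V=\bigcup_j \mathrm{supp}(E_j)$ holds because every vertex of a graph lies on some edge. Hence $(V,E)$ is a bipolar fuzzy hypergraph, and since each of its hyperedges has exactly two vertices in its support it is $2$-uniform. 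Reading the construction backwards shows that any bipolar fuzzy hypergraph whose hyperedges all have two-element support arises in this way from a bipolar fuzzy graph, so the two notions coincide on $2$-uniform hyperedges, which is exactly the assertion that bipolar fuzzy graphs form a special case.

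The only delicate point I anticipate is the covering condition $\bigcup_j \mathrm{supp}(E_j)=V$, which can fail if the underlying graph has an isolated vertex, i.e.\ a vertex incident to no edge of positive or negative strength. I would handle this in the standard way, by restricting attention to bipolar fuzzy graphs without isolated vertices (equivalently, assuming every vertex carries at least one incident edge), exactly as one restricts to such graphs when regarding crisp graphs as $2$-uniform hypergraphs; the remaining verifications are then routine checks that the inherited membership values respect the inequalities imposed by $A$. No deeper structural argument is needed, so the main content of the proof is the correct translation of edges into two-element hyperedges rather than any hard estimate.
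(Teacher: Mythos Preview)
The paper does not actually supply a proof of this proposition; it is stated without argument, immediately after the remark ``We explore the sense in which a bipolar fuzzy graph is a bipolar fuzzy hypergraph'' and before the discussion of bipolar fuzzy multigraphs. The intended content is evidently the classical observation, recalled in Section~2, that an ordinary graph is a $2$-uniform hypergraph, transported to the bipolar fuzzy setting.

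Your construction is exactly this transport: you take each edge $\{x,y\}$ of the bipolar fuzzy graph and promote it to an elementary bipolar fuzzy hyperedge with two-element support, assigning the edge's membership values $(\mu^P_B(x,y),\mu^N_B(x,y))$ to both endpoints. This is the natural (and essentially the only) way to read the proposition, and your verification of the hypergraph axioms is correct. You also correctly flag the one genuine subtlety, namely that the covering condition $V=\bigcup_j\mathrm{supp}(E_j)$ forces one to exclude isolated vertices, just as in the crisp case; the paper is silent on this point. So your proposal is sound and, if anything, more careful than what the paper provides.
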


A bipolar  fuzzy multigraph is a multivalued symmetric mapping $D=(\mu^P_D, \mu^N_D): V \times V \to [0, 1] \times [-1, 0]$.
A bipolar  fuzzy multigraph can be considered to be the ``disjoint union" or
``disjoint sum" of a collection of simple bipolar  fuzzy graphs, as is done with
crisp multigraphs. The same holds for multidigraphs. Therefore, these
structures can be considered as ``disjoint unions" or ``disjoint sums" of
bipolar  fuzzy hypergraphs.
\begin{definition}\label{D3.6}
A bipolar fuzzy hypergraph $H=(V, E)$ is {\it simple} if $A=(\mu^P_A, \mu^N_A)$, $B=(\mu^P_B, \mu^N_B) \in E$ and $\mu^P_A\leq  \mu^P_B $, $\mu^N_A\geq  \mu^N_B $ imply  that $\mu^P_A=\mu^P_B$, $\mu^N_A=\mu^N_B$. In particular, a (crisp) hypergraph $H^* =(V, E^*)$ is simple
if $X$, $Y\in E^*$ and $X \subseteq Y$ imply that $X = Y$.
A bipolar fuzzy hypergraph $H = (V, E)$ is {\it support simple} if $A=(\mu^P_A, \mu^N_A)$, $B=(\mu^P_B, \mu^N_B)$ $ \in E$,  supp($A$) = supp($B$), and $\mu^P_A\leq  \mu^P_B $, $\mu^N_A\geq  \mu^N_B $ imply  that $\mu^P_A=\mu^P_B$, $\mu^N_A=\mu^N_B$.
A bipolar fuzzy hypergraph $H = (V, E)$ is {\it strongly support simple} if $A=(\mu^P_A, \mu^N_A)$, $B=(\mu^P_B, \mu^N_B) \in E$ and  supp($A$) = supp($B$) imply that $A=B$.
\end{definition}
\begin{remark}
 The definition \ref{D3.6}  reduces to familiar definitions in
the special case where $H$ is a crisp hypergraph. The bipolar fuzzy definition of
simple is identical to the crisp definition of simple. A crisp hypergraph
is support simple and strongly support simple if and only if it has no
multiple edges. For bipolar fuzzy hypergraphs all three concepts imply no multiple edges.
Simple bipolar fuzzy hypergraphs are support simple and strongly support simple
bipolar fuzzy hypergraphs are support simple.
Simple and strongly support simple are independent concepts.
\end{remark}
\begin{definition}
 Let $H=(V, E)$ be a bipolar fuzzy hypergraph. Suppose that $\alpha \in [0, 1]$, $\beta \in $ [-1, 0]. Let
 \begin{itemize}
   \item $E_{(\alpha, \beta)}= \{A_{(\alpha, \beta)}|\,{\rm where ~A~ is~ positive~ and ~negative~ membership~ function~ defined~ on~ E_j\in E}\}, \ \ \ \ \ A_{(\alpha, \beta)}=\{x\,|\,\mu^P_A(x)\geq \alpha ~{\rm or}~ \mu^N_A(x)\leq  \beta\},$ \ and
   \item $V_{(\alpha, \beta)}=\bigcup_{A \in E}A_{(\alpha, \beta)}.$ \end{itemize}

 If $E_{(\alpha, \beta)} \neq \emptyset$, then the crisp hypergraph $H_{(\alpha, \beta)}=(V_{(\alpha, \beta)}, E_{(\alpha, \beta)})$ is the $(\alpha,\beta)-$ level hypergraph of $H$.
\end{definition}

Clearly, it is possible that $A_{(\alpha,\beta)}=B_{(\alpha,\beta)}$ for $A \neq B,$ by using distinct markers to
identity the various members of $E$ a distinction between  $A_{(\alpha,\beta)}$ and $B_{(\alpha,\beta)}$ to represent multiple edges in  $H_{(\alpha, \beta)}$. However, we do not take this approach unless otherwise stated, we will always regard  $H_{(\alpha, \beta)}$  as having no repeated edges.

The families of crisp sets (hypergraphs) produced by the $(\alpha, \beta)$-cuts of a bipolar fuzzy hypergraph share  an important relationship with each other, as expressed below:\\
suppose $\mathbb{X}$ and $\mathbb{Y}$ are two families of sets  such that for each set $X$ belonging to $\mathbb{X}$ there is at least one set $Y$ belonging to $\mathbb{Y}$ which contains $X$. In this case we say that $\mathbb{Y}$  {\it absorbs} $\mathbb{X}$ and symbolically write $\mathbb{X}\sqsubseteq \mathbb{Y}$ to express this relationship between $\mathbb{X}$ and $\mathbb{Y}$. Since it is possible for $\mathbb{X}\sqsubseteq \mathbb{Y}$ while $\mathbb{X}\cap \mathbb{Y} =\emptyset,$ we have that $\mathbb{X}\subseteq \mathbb{Y}\Rightarrow$ $\mathbb{X}\sqsubseteq \mathbb{Y}$,  whereas the converse is generally false. If $\mathbb{X}\sqsubseteq \mathbb{Y}$
and $\mathbb{X}\neq \mathbb{Y}$, then we write $\mathbb{X}\sqsubset \mathbb{Y}$.

\begin{definition}
Let $H=(V, E)$ be a bipolar fuzzy hypergraph.  Let $H_{(s, t)}$ be the $(s, t)-$ level hypergraph of $H$.  The sequence of real numbers
\[
\{(s_1, r_1), (s_2, r_2), \ldots, (s_n, r_n)\}, \ \ \  0 < s_1 < s_2 < \ldots < s_n ~{\rm and}~ 0 > r_1 > r_2 >\ldots
 > r_n, ~{\rm where}~  (s_n, r_n)=h(H),
\]
which satisfies the properties:
\begin{itemize}
  \item if \ $(s_{i-1}, r_{i-1})< (u, v) \leq (s_i, r_i)$, then $E_{(u, v)}=E_{(s_i, r_i)}$, and
  \item $E_{(s_i, r_i)}\sqsubset E_{(s_{i+1}, r_{i+1})}$,
\end{itemize}
is called the {\it fundamental sequence}
of $H,$ and is denoted by $F(H)$ and the set of $(s_i, r_i)$-level hypergraphs $\{H_{(s_1, r_1)}, H_{(s_2, r_2)}, \ldots,  H_{(s_n, r_n)}\}$ is called the {\it set of core hypergraphs} of $H$ or, simply, the {\it core set} of $H$, and is denoted by $C(H).$
\end{definition}

\begin{definition}
Suppose $H=(V, E)$ is a bipolar fuzzy hypergraph with
\[
F(H)=\{(s_1, r_1), (s_2, r_2), \ldots, (s_n, r_n)\},
\]
and $s_{n+1}=0,$ $r_{n+1}=0,$  then $H$ is called {\it sectionally elementary} if for each
edge $A=(\mu^P_A, \mu^N_A)\in E$, each $i=\{1, 2, \ldots, n\}$, and $(s_i, r_i) \in F(H)$, $A_{(s, t)}=A_{(s_i, r_i)}$ for all $(s, t) \in ((s_{i-1}, r_{i-1}), (s_{i}, r_{i}) ].$
\end{definition}
Clearly $H$ is sectionally elementary if and only if $A(x)=(\mu^P_A(x), \mu^N_A(x)) \in F(H)$ for
each $ A \in E$ and each $x \in X$.

\begin{definition}
A sequence of crisp hypergraphs $H_i=(V_i, E^*_i)$, $1\leq i \leq n$, is said
to be {\it ordered} if $ H_1 \subset H_2\subset \ldots \subset H_n$. The
sequence $\{H_i\,|\, 1\leq i \leq n \}$ is {\it simply ordered} if it is ordered and if whenever $E^* \in E^*_{i+1}-E^*_i$, then $E^* \nsubseteq V_i$.
\end{definition}
\begin{definition}
A bipolar  fuzzy hypergraph $H$ is {\it ordered} if the $H$ induced
fundamental sequence of hypergraphs is ordered. The bipolar  fuzzy hypergraph $H$ is
{\it simply ordered} if the $H$ induced fundamental sequence of hypergraphs is
simply ordered.
\end{definition}

\begin{example}
Consider the bipolar fuzzy hypergraph $H=(V, E)$, where $V=\{a, b, c, d\}$ and $E=\{E_1, E_2, E_3, E_4, E_5\}$ which is represented by the following incidence matrix:
\begin{table}[!h]
\caption{Incidence matrix of $H$}
\centering
\medskip
\begin{tabular}{l|ccccc}
$H$& $E_1$&$E_2$ & $E_3$ & $E_4$& $E_5$  \\
 \hline
$a$ &$(0.7, -0.2)$ & $(0.9, -0.2)$ & (0, 0) & (0, 0) & $(0.4, -0.3)$\\
$b$& $(0.7, -0.2)$& $(0.9, -0.2)$ & $(0.9, -0.2)$ & $(0.7, -0.2)$ & (0, 0)\\
$c$& (0, 0)&(0, 0) & $(0.9, -0.2)$ & $(0.7, -0.2)$ & $(0.4, -0.3)$\\
$d$& (0, 0)& $(0.4, -0.3)$ & (0, 0) & $(0.4, -0.3)$ & $(0.4, -0.3)$\\
\end{tabular}
\end{table}
\newline
Clearly, $h(H)=(0.9, -0.1).$\\
Now
\[ E_{(0.9, -0.1)}=\{\{a, b\}, \{ b, c\}   \}  \]
\[ E_{(0.7, -0.2)}=\{ \{a, b\}, \{ b, c\}\}\]
\[
E_{(0.4, -0.3)}=\{\{a, b\}, \{a, b, d\}, \{ b, c\}, \{ b, c, d\}, \{a, c, d\}   \}.
\]
Thus for $ 0.4 < s  \leq 0.9$ and $-0.1 > t \geq -0.3$,
 $E_{(s, t)}=\{ \{a, b\}, \{ b, c\}\}$, and for  $0 < s \leq 0.4$ and  $ -1 < t \geq -0.3$,

\[
E_{(s, t)}=\{\{a, b\}, \{a, b, d\}, \{ b, c\}, \{ b, c, d\}, \{a, c, d\}   \}.
\]
We note that
$ E_{(0.9, -0.1)} \subseteq E_{(0.4, -0.3)}$.
The fundamental sequence is $F(H)$=$\{(s_1, r_1)=(0.9, -0.1), \ (s_2, r_2)=(0.4, -0.3)\}$ and the set of core hypergraph is $C(H)=\{H_1=(V_1, E_1)=H_{(0.9, -0.1)}, \ H_2=(V_2, E_2)=H_{(0.4, -0.3)} \}$, where
\[
V_1=\{a, b, c\}, ~E_1=\{\{a, b\}, \{b, c\}\}
\]
\[
V_2=\{a, b, c, d\}, \ E_2=\{\{a, b\}, \{a, b, d\}, \{ b, c\}, \{b, c, d\}, \{ a, c, d\}\}.
 \]
$H$ is support simple, but not simple. $H$ is not sectionally elementary since $E_{1(s, t)}
  \neq E_{1(0.9, -0.1)}$ for $s=0.7$, $t=-0.2$. Clearly,  bipolar fuzzy hypergraph $H$ is simply ordered.
\end{example}

\begin{proposition}
Let $H=(V, E)$ be an elementary  bipolar fuzzy hypergraph. Then $H$ is
support simple if and only if $H$ is strongly support simple.
\end{proposition}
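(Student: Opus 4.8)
The statement is a biconditional, so I would prove the two implications separately, and one direction is essentially free. Strongly support simple always implies support simple, regardless of elementariness, as already recorded in the Remark following Definition~\ref{D3.6}: if $H$ is strongly support simple and $A,B\in E$ satisfy $\mathrm{supp}(A)=\mathrm{supp}(B)$, then $A=B$ outright, so the implication defining support simplicity holds trivially. Thus the entire content of the proposition lies in the converse, namely that for an \emph{elementary} $H$ support simplicity forces strong support simplicity.

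For that converse I would argue as follows. Assume $H=(V,E)$ is elementary and support simple, and take $A=(\mu^P_A,\mu^N_A)$ and $B=(\mu^P_B,\mu^N_B)$ in $E$ with $\mathrm{supp}(A)=\mathrm{supp}(B)=:S$; the goal is $A=B$. Because $H$ is elementary, each of $A$ and $B$ is single-valued on its support, so there are constants $(p_A,n_A)$ and $(p_B,n_B)$ in $[0,1]\times[-1,0]$ with $(\mu^P_A(x),\mu^N_A(x))=(p_A,n_A)$ and $(\mu^P_B(x),\mu^N_B(x))=(p_B,n_B)$ for every $x\in S$, while both pairs equal $(0,0)$ off $S$. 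Consequently the pointwise inequalities $\mu^P_A\le\mu^P_B$ and $\mu^N_A\ge\mu^N_B$ collapse to the single numerical comparison $p_A\le p_B$ together with $n_A\ge n_B$ (the off-support points contribute only $0\le 0$ and $0\ge 0$). Once I know $A$ and $B$ are comparable in this bipolar order, I apply the defining condition of support simplicity with equal supports to conclude $\mu^P_A=\mu^P_B$ and $\mu^N_A=\mu^N_B$, i.e. $A=B$, which is exactly strong support simplicity.

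The step I expect to be the crux — and the only place where the bipolar setting differs essentially from the classical single-membership one — is precisely this reduction to a comparable pair. In the ordinary fuzzy case an edge carries a single real value on its support, and any two values in $[0,1]$ are comparable, so one may assume "without loss of generality" that the smaller-valued edge is $\le$ the other and finish at once. In the bipolar case the relevant order on $[0,1]\times[-1,0]$ is only a partial order: two constant values such as $(p_A,n_A)$ and $(p_B,n_B)$ can be incomparable (for instance $p_A<p_B$ but $n_A<n_B$). I would therefore devote the bulk of the argument to justifying why comparability may be assumed here — either by forming the elementary ``meet'' edge with positive part $\min(\mu^P_A,\mu^P_B)$ and negative part $\max(\mu^N_A,\mu^N_B)$ and exploiting its relation to $A$ and $B$ within the level structure (noting that this requires that meet to lie in $E$), or by appealing to whatever structural hypothesis places the two constants on a common chain. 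This is the step that demands the most care, since it is exactly where a naive transcription of the classical proof would otherwise slip.
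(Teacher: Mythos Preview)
Your overall structure matches the paper's exactly: the paper also treats the direction ``strongly support simple $\Rightarrow$ support simple'' as obvious and concentrates on the converse, reducing the two elementary edges $A,B$ with common support to a comparability claim and then invoking support simplicity.

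The difference lies precisely at the point you flag as the crux. The paper dispatches it in one line: ``We assume without loss of generality that $h(A)\le h(B)$. Since $H$ is elementary, it follows that $\mu^P_A\le\mu^P_B$, $\mu^N_A\ge\mu^N_B$.'' But $h(\cdot)$ is defined in the paper solely via the positive membership, so $h(A)\le h(B)$ yields only $p_A\le p_B$; the asserted inequality $n_A\ge n_B$ is not justified. In other words, the paper performs exactly the ``naive transcription of the classical proof'' you caution against, and your suspicion that this step needs more is well founded. Indeed, take $V=\{x\}$ and $E=\{A,B\}$ with $A(x)=(0.5,-0.7)$, $B(x)=(0.7,-0.5)$: this $H$ is elementary and (vacuously) support simple, since neither of $A,B$ dominates the other in the bipolar order, yet $\mathrm{supp}(A)=\mathrm{supp}(B)$ with $A\ne B$, so $H$ is not strongly support simple. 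Thus the gap you isolate is genuine and is not closed by the paper's argument; your proposal is the more accurate account of where the difficulty lies, even though neither version completes the proof as stated.
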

\begin{proof} Suppose that $H$ is elementary, support simple and that
supp($A$) = supp($B$). We assume without loss of generality that $h(A) \leq h(B)$. Since $H$ is  elementary,
it follows that $\mu^P_A \leq \mu^P_B$, $\mu^N_A \geq \mu^N_B$ and since $H$ is support simple that $\mu^P_A=\mu^P_B$, $\mu^N_A=\mu^N_B$. Therefore
$H$ is strongly support simple. The proof of converse part is obvious.
\end{proof}
The complexity of a bipolar fuzzy hypergraph depends in part on how many
edges it has. The natural question arises: is there an upper bound on the
number of edges of a bipolar fuzzy hypergraph of order $n$?
\begin{proposition}\label{P3.14}
Let $H=(V, E)$ be a simple bipolar fuzzy hypergraph of order $n$.
Then there is no upper bound on $|E|$.
\end{proposition}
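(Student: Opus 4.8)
The plan is to exhibit, for every positive integer $N$, a simple bipolar fuzzy hypergraph of order $n=2$ carrying exactly $N$ edges; since $N$ is arbitrary, this shows $|E|$ is unbounded while $|V|$ stays fixed, which is precisely the assertion. The guiding observation is that the defining implication of simplicity in Definition \ref{D3.6} has the hypothesis ``$\mu^P_A\le\mu^P_B$ and $\mu^N_A\ge\mu^N_B$''. If we arrange the edges so that no two of them are comparable in this partial order, then this hypothesis is never met for a genuine pair of distinct edges, the implication holds vacuously, and the hypergraph is automatically simple. Thus the whole problem reduces to producing arbitrarily large antichains of bipolar fuzzy edges on a fixed vertex set.

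Concretely, I would take $V=\{a,b\}$ and, for a fixed $N$, define edges $E_1,\dots,E_N$ by
\[
\mu^P_{E_i}(a)=\tfrac{i}{N+1},\quad \mu^P_{E_i}(b)=\tfrac{N+1-i}{N+1},\quad \mu^N_{E_i}(a)=-\tfrac{i}{N+1},\quad \mu^N_{E_i}(b)=-\tfrac{N+1-i}{N+1},
\]
for $i=1,\dots,N$. Each $E_i$ is a nontrivial bipolar fuzzy set with $\mathrm{supp}(E_i)=\{a,b\}$, so that $\bigcup_i\mathrm{supp}(E_i)=V$ and $(V,E)$ is a bona fide bipolar fuzzy hypergraph of order $2$.

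The key step is the incomparability check. For $i<j$ one has $\mu^P_{E_i}(a)<\mu^P_{E_j}(a)$ but $\mu^P_{E_i}(b)>\mu^P_{E_j}(b)$, so neither $\mu^P_{E_i}\le\mu^P_{E_j}$ nor $\mu^P_{E_j}\le\mu^P_{E_i}$ holds. Consequently the premise of the simplicity implication fails for every pair of distinct edges, and $H$ is simple by the vacuous-truth argument above. As $N$ ranges over the positive integers we obtain simple bipolar fuzzy hypergraphs of the \emph{same} order $n=2$ with $|E|=N$ arbitrarily large, so there is no upper bound on $|E|$.

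I do not expect a genuine obstacle: the only thing to get right is the explicit antichain, and one should double-check that the chosen memberships keep each edge nontrivial (a nonzero coordinate in each of $a$ and $b$) so that the covering condition $V=\bigcup_i\mathrm{supp}(E_i)$ holds and each $E_i$ is legitimately an edge. If one instead wanted order exactly $n$ for a prescribed $n\ge 2$, the same idea applies by spreading the incomparable ``weights'' across two coordinates of an $n$-vertex set while assigning fixed nonzero values on the remaining vertices to preserve the covering condition.
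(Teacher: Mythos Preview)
Your proof is correct and follows the same overall strategy as the paper: exhibit, for each $N$, a simple bipolar fuzzy hypergraph on a two-element vertex set with exactly $N$ edges. The paper also takes $V=\{x,y\}$ and writes down $N$ explicit edges, then declares the resulting hypergraph simple.

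The one noteworthy difference is the mechanism by which simplicity is achieved. You arrange the \emph{positive} membership functions themselves to be pairwise incomparable (an antichain in the pointwise order), so the hypothesis $\mu^P_A\le\mu^P_B$ already fails for distinct edges and simplicity is vacuous. The paper instead sets $\mu^P_{A_i}(x)=\mu^P_{A_i}(y)=\tfrac{1}{i+1}$ and $\mu^N_{A_i}(x)=\mu^N_{A_i}(y)=-\tfrac{i}{i+1}$, so the positive parts are a genuine chain; simplicity holds only because the negative parts move in the same direction, so one can never have $\mu^P_A\le\mu^P_B$ together with $\mu^N_A\ge\mu^N_B$ unless $A=B$. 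Your version has the advantage that the verification of simplicity is self-contained in the positive component and does not rely on the interplay with $\mu^N$; the paper's version has the incidental feature that every edge is elementary. Either construction proves the proposition.
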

\begin{proof}
Let $V = \{x,y\}$, and define $E_N$= $\{A_i= (\mu^P_{A_i}, \mu^N_{A_i})~|~i = 1,2,\ldots, N \}$,
where
\[
\mu^P_{A_i}(x)=\frac{1}{i+1}, ~~\mu^N_{A_i}(x)=-1 +\frac{1}{i+1},
\]
\[
\mu^P_{A_i}(y)=\frac{1}{i+1}, ~~\mu^N_{A_i}(y)=-\frac{i}{i+1}.
\]
Then $H_N = (V, E_N)$ is a simple bipolar fuzzy hypergraph with $N$ edges. This ends the proof.
\end{proof}
\begin{proposition}
Let $H=(V, E)$ be a support simple bipolar fuzzy hypergraph of order $n$.
Then there is no upper bound on $|E|$.
\end{proposition}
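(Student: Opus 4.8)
The plan is to observe that the class of support simple bipolar fuzzy hypergraphs contains the class of simple ones, so that the very construction used in Proposition \ref{P3.14} already settles this statement. Indeed, comparing the two notions in Definition \ref{D3.6}, support simplicity only demands the implication ``$\mu^P_A\le\mu^P_B$ and $\mu^N_A\ge\mu^N_B$ force $\mu^P_A=\mu^P_B$, $\mu^N_A=\mu^N_B$'' for edges $A,B$ with ${\rm supp}(A)={\rm supp}(B)$, whereas simplicity requires it for \emph{all} edges $A,B$. Thus every simple bipolar fuzzy hypergraph is support simple, as already recorded in the Remark following Definition \ref{D3.6}.

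Concretely, I would reuse the two-vertex hypergraph $H_N=(V,E_N)$ of Proposition \ref{P3.14} verbatim, with $V=\{x,y\}$ and edges $A_i$ given by $\mu^P_{A_i}(x)=\mu^P_{A_i}(y)=\frac{1}{i+1}$, $\mu^N_{A_i}(x)=-1+\frac{1}{i+1}$, and $\mu^N_{A_i}(y)=-\frac{i}{i+1}$ for $i=1,2,\ldots,N$. Since that $H_N$ was shown to be simple, it is support simple by the inclusion above; it has order $2$ and exactly $N$ edges. As $N$ ranges over all positive integers, we obtain support simple bipolar fuzzy hypergraphs with arbitrarily many edges, so no upper bound on $|E|$ exists.

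I do not anticipate a genuine obstacle. The one point meriting a line of verification is that support simplicity is not satisfied vacuously: every edge $A_i$ has ${\rm supp}(A_i)=\{x,y\}$ because all four of its membership values are nonzero, so all $N$ edges share a common support. Consequently support simplicity is tested on every pair of distinct edges, and it holds precisely because distinct $A_i,A_j$ are incomparable in the required order (matching $\mu^P$ forces $j\le i$ while matching $\mu^N$ forces $i\le j$), exactly as in the simple case.
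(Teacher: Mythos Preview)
Your argument is correct and is essentially identical to the paper's own proof, which simply notes that the class of support simple bipolar fuzzy hypergraphs contains the class of simple ones and invokes Proposition~\ref{P3.14}. Your additional verification that all edges share the common support $\{x,y\}$ is a helpful detail but not a departure from the paper's approach.
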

\begin{proof}
The class of support simple bipolar fuzzy hypergraphs contains the class of
simple bipolar fuzzy hypergraphs, thus the result follows from Proposition \ref{P3.14}.
\end{proof}

\begin{proposition}
Let $H=(V, E)$ be an elementary simple bipolar fuzzy hypergraph of order $n$.
Then there is no upper bound on $|E|\leq 2^n-1$ if and only if $\{supp(A)\,|\, A \in E\}=P(V)-\emptyset$.
\end{proposition}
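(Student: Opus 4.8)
The plan is to read the statement in its evidently intended form---that for an elementary simple bipolar fuzzy hypergraph of order $n$ we have $|E| \le 2^n - 1$, with equality if and only if $\{\mathrm{supp}(A) \mid A \in E\} = P(V) \setminus \{\emptyset\}$---and to reduce everything to a single structural fact: distinct edges of $H$ have distinct supports. Once the support map $A \mapsto \mathrm{supp}(A)$ is known to be injective on $E$, both the bound and its equality case become pure counting.

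First I would show that $H$ is strongly support simple. By the Remark, every simple bipolar fuzzy hypergraph is support simple, so $H$ is support simple; and since $H$ is elementary, the preceding proposition (an elementary support simple hypergraph is strongly support simple) applies, giving that $H$ is strongly support simple. By definition this says that $\mathrm{supp}(A) = \mathrm{supp}(B) \Rightarrow A = B$ for all $A, B \in E$, i.e.\ the assignment $A \mapsto \mathrm{supp}(A)$ is injective.

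Next I would count. Each $A \in E$ is nontrivial, so $\mathrm{supp}(A)$ is a nonempty subset of $V$, and as $|V| = n$ there are exactly $2^n - 1$ such subsets. Injectivity of the support map therefore forces $|E| = |\{\mathrm{supp}(A) \mid A \in E\}| \le 2^n - 1$. For the equality case, an injection into a set of size $2^n - 1$ attains the value $2^n - 1$ on its domain precisely when it is surjective, so $|E| = 2^n - 1$ if and only if $\{\mathrm{supp}(A) \mid A \in E\} = P(V) \setminus \{\emptyset\}$, as required.

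The only substantive step is the first: the bound stems from strong support simplicity, not from simplicity alone, so the crux is to chain the Remark (simple $\Rightarrow$ support simple) with the earlier proposition (elementary $+$ support simple $\Rightarrow$ strongly support simple) in order to separate the edges by their supports. After that the argument is elementary cardinality bookkeeping, and the equality characterization is just the injectivity--surjectivity dichotomy for a map between finite sets of equal size.
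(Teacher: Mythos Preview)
Your argument is correct and shares the paper's core idea---injectivity of the support map $A \mapsto \mathrm{supp}(A)$---but you reach it by chaining the Remark (simple $\Rightarrow$ support simple) with the preceding Proposition (elementary and support simple $\Rightarrow$ strongly support simple), whereas the paper asserts directly that ``since $H$ is elementary and simple, each nontrivial $W\subseteq V$ can be the support of at most one $A\in E$.'' The treatments of the equality case differ more substantially: you settle the biconditional by the injectivity--surjectivity equivalence for a map into a finite set of size $2^n-1$, while the paper instead exhibits an explicit elementary simple bipolar fuzzy hypergraph attaining $|E|=2^n-1$, assigning to each nonempty $W\subseteq V$ an edge with $\mu^P_A(x)=1/|W|$ and $\mu^N_A(x)=-1+1/|W|$ for $x\in W$. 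Your route delivers the stated biconditional cleanly; the paper's construction has the additional virtue of showing that an extremal $H$ actually exists, a point your argument does not address (though the iff, as you have read it, does not strictly require it).
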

\begin{proof}
Since $H$ is elementary and simple, each nontrivial $ W\subseteq  V$ can be the
support of at most one $A=(\mu^P_A, \mu^N_A) \in E$. Therefore, $|E|\leq 2^n-1$. To show there exists
an elementary, simple $H$ with $|E|= 2^n-1$, let $E=\{A=(\mu^P_A, \mu^N_A)~|~ W\subseteq V\}$ be the set of
functions defined by
\[
\mu^P_A(x)= \frac{1}{|W|}, ~{\rm if} ~x \in W, \ \ ~~~\mu^P_A(x)=0, \ {\rm if}~ x \notin W,
\]
\[
\mu^N_A(x)=-1 + \frac{1}{|W|}, ~{\rm if} ~x \in W, \ \  ~~~\mu^N_A(x)=-1, {\rm if}~ x \notin W.
\]

Then each one element has height $(1, -1)$, each two elements  has height $(0.5, -0.5)$ and so on. Hence $H$ is an elementary and simple, and $|E|= 2^n-1$.
\end{proof}

We state the following proposition without proof.
\begin{proposition}$\rule{20mm}{0mm}$\\
$(a)$ \ If $H=(V, E)$ is an elementary bipolar fuzzy hypergraph, then
$H$ is ordered. \\
$(b)$ \ If $H$ is an ordered bipolar fuzzy hypergraph with simple
support hypergraph, then $H$ is elementary.
\end{proposition}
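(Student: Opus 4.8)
The plan is to prove the two implications separately, in each case translating the definitions of \emph{elementary}, \emph{ordered}, and \emph{simple support} into statements about how the cuts $A_{(\alpha,\beta)}$ behave as the threshold $(\alpha,\beta)$ varies. For part $(a)$, the main idea is that elementarity forces every cut to be ``all or nothing'' on supports. First I would observe that if $A=(\mu^P_A,\mu^N_A)\in E$ is elementary, then $\mu^P_A$ is constant (equal to $h(A)$) on $\mathrm{supp}^P(A)$ and $\mu^N_A$ is constant (equal to $d(A)$) on $\mathrm{supp}^N(A)$; consequently, for the matched thresholds occurring along the fundamental sequence, $A_{(\alpha,\beta)}$ is either all of $\mathrm{supp}(A)$ or empty. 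The key consequence is that, as the threshold is weakened, an elementary edge never shrinks: it is either absent or contributes its entire support unchanged.

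I would then compare two consecutive levels of the fundamental sequence and show that every edge of the stronger-cut core reappears, as the identical subset, in the weaker-cut core. This upgrades the absorption relation $E_{(s_i,r_i)}\sqsubset E_{(s_{i+1},r_{i+1})}$ guaranteed by the fundamental sequence to genuine set inclusion $E_{(s_i,r_i)}\subset E_{(s_{i+1},r_{i+1})}$, and the corresponding inclusion of vertex sets $V_{(s_i,r_i)}\subseteq V_{(s_{i+1},r_{i+1})}$ follows immediately. Hence $H_{(s_i,r_i)}\subset H_{(s_{i+1},r_{i+1})}$ for all $i$, so the induced fundamental chain is ordered and $H$ is ordered.

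For part $(b)$ the plan is a proof by contradiction, whose essential preliminary step is to identify the largest (weakest-threshold) core in the fundamental chain with the support hypergraph $H^*=(V,E^*)$, where $E^*=\{\mathrm{supp}(A)\mid A\in E\}$. This holds because the first clause of the fundamental-sequence definition makes $E_{(\alpha,\beta)}$ constant for all thresholds below the lowest fundamental value, while for thresholds approaching $(0,0)$ the disjunctive condition defining $A_{(\alpha,\beta)}$ collapses to $x\in\mathrm{supp}^P(A)$ or $x\in\mathrm{supp}^N(A)$, i.e.\ $A_{(\alpha,\beta)}=\mathrm{supp}(A)$; thus the edge set of the largest core is exactly $E^*$. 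Now suppose $H$ is not elementary: some edge $D$ fails to be single-valued on its support, so there is a threshold at which $\emptyset\neq D_{(\alpha,\beta)}\subsetneq\mathrm{supp}(D)$. This cut coincides with $D_{(s_k,r_k)}$ for some fundamental level and is therefore an edge of the core $H_{(s_k,r_k)}$; since $H$ is ordered, that core is contained in the largest core, so $D_{(\alpha,\beta)}\in E^*$. But $\mathrm{supp}(D)\in E^*$ as well, and $D_{(\alpha,\beta)}\subsetneq\mathrm{supp}(D)$ then exhibits one edge of $H^*$ properly contained in another, contradicting simplicity of $H^*$. Hence every edge is single-valued on its support and $H$ is elementary.

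The hard part will be the preliminary identification of the largest core with $H^*$ in $(b)$: it rests on reading the fundamental-sequence definition correctly, on the bipolar cut's ``or'' condition degenerating to the support near the origin, and on the standing convention that level hypergraphs carry no repeated edges, so that distinct edges sharing a support are identified. A secondary technical point, requiring some care in both parts, is to confirm that ``not single-valued on the support'' genuinely yields a \emph{nonempty, proper} cut; here one must keep track of the positive and negative parts separately and allow for $\mathrm{supp}^P(A)\neq\mathrm{supp}^N(A)$, though the disjunctive form of the cut makes the extreme cut a proper subset of $\mathrm{supp}(A)$ as soon as either membership function is nonconstant.
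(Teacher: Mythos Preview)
The paper explicitly states this proposition \emph{without proof}, so there is no argument in the source against which to compare your proposal.

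Your outline is a reasonable and essentially correct reconstruction of the standard Mordeson--Nair argument transported to the bipolar setting. For part~$(a)$, the ``all or nothing'' behaviour of cuts of elementary edges is exactly the right observation, and it does upgrade the absorption relation $\sqsubset$ between successive core edge sets to genuine inclusion, which is what ``ordered'' requires. For part~$(b)$, your identification of the weakest-threshold core with the support hypergraph and the ensuing contradiction with simplicity is the intended mechanism. Two cautions are worth recording. First, the paper's own Definition~3.8 of the fundamental sequence contains an apparent typographical inconsistency (compare the stated inequalities $0<s_1<\cdots<s_n$ with Example~3.12, where $(s_1,r_1)=(0.9,-0.1)$ and $(s_2,r_2)=(0.4,-0.3)$); your argument implicitly uses the convention that $(s_1,r_1)$ is the strongest threshold and $(s_n,r_n)$ the weakest, which is the only reading compatible with the example and with the absorption clause. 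Second, in part~$(b)$ your claim that a non-elementary edge $D$ admits a threshold with $\emptyset\neq D_{(\alpha,\beta)}\subsetneq\mathrm{supp}(D)$ needs a short explicit verification in the bipolar case: because the cut uses the disjunction $\mu^P_D(x)\ge\alpha$ \emph{or} $\mu^N_D(x)\le\beta$, one must choose $(\alpha,\beta)$ so that \emph{both} clauses fail on some support point while at least one clause holds on another; your closing remarks acknowledge this, but the actual choice (take $\alpha$ strictly between the two positive values witnessing nonconstancy, or the analogous $\beta$, and set the other parameter beyond the relevant extreme of $D$) should be spelled out in a finished proof.
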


\begin{definition}
The {\it dual of a bipolar fuzzy hypergraph} $H=(V, E)$ is a bipolar fuzzy
hypergraph $H^D = (E^D, V^D)$ whose vertex set is the edge set of $H$ and with edges
$V^D:E^D \to [0, 1] \times [-1, 0]$ by $V^D(A^D) = (\mu^D_A(x), \nu^D_A(x))$. $H^D$ is a bipolar fuzzy hypergraph whose incidence matrix is the transpose of the incidence matrix of $H$, thus $H^{DD} =H$.
\end{definition}
\begin{example}
Consider a bipolar fuzzy hypergraph $H = (V,E)$ such that $V = \{ x_1, x_2, x_3, x_4\}$,
$E = \{E_1, E_2, E_3, E_4 \}$, where $E_1 = \{\frac{x_1}{(0.5, -0.3)},  \frac{x_2}{(0.4, -0.2)}\}$, $E_2 =\{ \frac{x_2}{(0.4, -0.2)},  \frac{x_3}{(0.3, -0.6)}\}$, $E_3 = \{\frac{x_3}{(0.3, -0.6)}, \frac{x_4}{(0.5, -0.1)}\}$, $E_4 = \{\frac{x_4}{(0.5, -0.1)} , \frac{x_1}{(0.5, -0.3)}\}.$
\begin{figure}[!h]
\centering

\scalebox{1} {
\begin{pspicture}(0,-2.4589062)(7.1028123,2.4189062)
\psellipse[linewidth=0.04,dimen=outer](5.2609377,0.14890625)(0.64,2.25)
\psellipse[linewidth=0.04,dimen=outer](1.6909375,0.14890625)(0.61,2.27)
\psellipse[linewidth=0.04,dimen=outer](3.4909375,1.54310626)(2.53,0.49)
\psellipse[linewidth=0.04,dimen=outer](3.5309374,-1.5410937)(2.43,0.42)
\psdots[dotsize=0.12](1.5609375,1.54310626)
\psdots[dotsize=0.12](5.3609376,1.54310626)
\psdots[dotsize=0.12](5.3009377,-1.5610938)
\psdots[dotsize=0.12](1.5409375,-1.6010938)
\usefont{T1}{ptm}{m}{n}
\rput(3.7723436,-2.2310936){$E_3$}
\usefont{T1}{ptm}{m}{n}
\rput(6.592344,0.00890625){$E_2$}
\usefont{T1}{ptm}{m}{n}
\rput(0.43234375,0.10890625){$E_4$}
\usefont{T1}{ptm}{m}{n}
\rput(0.3423437,1.8089062){$x_1(0.5, -0.3)$}
\usefont{T1}{ptm}{m}{n}
\rput(0.12234375,-1.6310937){$x_4(0.5, -0.1)$}
\usefont{T1}{ptm}{m}{n}
\rput(3.6523438,2.2289062){$E_1$}
\usefont{T1}{ptm}{m}{n}
\rput(6.502344,2.1889062){$x_2(0.4, -0.2)$}
\usefont{T1}{ptm}{m}{n}
\rput(6.502344,-1.8710938){$x_3(0.3, -0.6)$}
\end{pspicture}
}
\caption{Bipolar fuzzy hypergraph}
\end{figure}
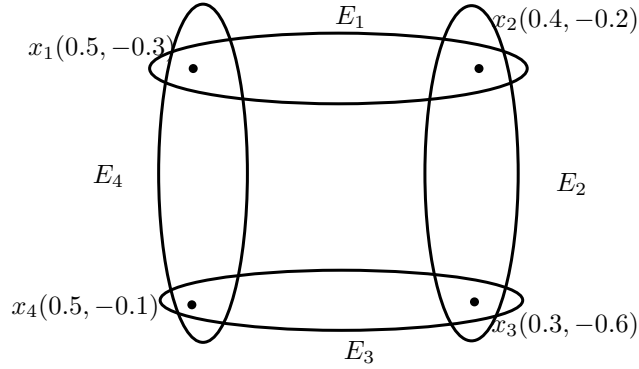

\begin{table}[!h]
\caption{The corresponding incidence matrix of $H$ is given below: }
\centering
\medskip

\begin{tabular}{l|cccc}
$M_H$& $E_1$&$E_2$ &$E_3$ & $E_4$\\
 \hline
$x_1$ &$(0.5, -0.3)$ & (0, 0)& (0, 0)& $(0.5, -0.3)$ \\
$x_2$& $(0.4, -0.2)$& $(0.4, -0.2)$& (0, 0)& (0, 0)\\
$x_3$& (0, 0)& $(0.3, -0.6)$& $(0.3, -0.6)$& (0, 0)\\
$x_4$&(0, 0)&(0, 0)& $(0.5, -0.1)$ & $(0.5, -0.1)$\\
\end{tabular}
\end{table}
\newpage
Consider the dual bipolar fuzzy hypergraph $H^D = (E^D, V^D)$ of $H$ such that $E^D = \{ e_1, e_2, e_3, e_4\}$,
$V^D = \{A, B, C, D \}$ where \[A = \{\frac{e_1}{(0.5, -0.3)},~  \frac{e_4}{(0.5, -0.3)}\},~ B =\{ \frac{e_1}{(0.4, -0.2)},  \frac{e_2}{(0.4, -0.2)}\},\] \[C = \{\frac{e_2}{(0.3, -0.6)},~ \frac{e_3}{(0.3, -0.6)}\},~ D = \{\frac{e_3}{(0.5, -0.1)}, ~ \frac{e_4}{(0.5, -0.1)}\}.\]

\begin{figure}[!h]
\centering

\scalebox{1} {
\begin{pspicture}(0,-2.4589062)(7.1028123,2.4189062)
\psellipse[linewidth=0.04,dimen=outer](5.2609377,0.14890625)(0.64,2.25)
\psellipse[linewidth=0.04,dimen=outer](1.6909375,0.14890625)(0.61,2.27)
\psellipse[linewidth=0.04,dimen=outer](3.4909375,1.54310626)(2.53,0.49)
\psellipse[linewidth=0.04,dimen=outer](3.5309374,-1.5410937)(2.43,0.42)
\psdots[dotsize=0.12](1.5609375,1.54310626)
\psdots[dotsize=0.12](5.3609376,1.54310626)
\psdots[dotsize=0.12](5.3009377,-1.5610938)
\psdots[dotsize=0.12](1.5409375,-1.6010938)
\usefont{T1}{ptm}{m}{n}
\rput(3.7723436,-2.2310936){$D$}
\usefont{T1}{ptm}{m}{n}
\rput(6.592344,0.00890625){$C$}
\usefont{T1}{ptm}{m}{n}
\rput(0.43234375,0.10890625){$A$}
\usefont{T1}{ptm}{m}{n}
\rput(0.3423437,1.8089062){$e_1$}
\usefont{T1}{ptm}{m}{n}
\rput(0.12234375,-1.6310937){$e_4$}
\usefont{T1}{ptm}{m}{n}
\rput(3.6523438,2.2289062){$B$}
\usefont{T1}{ptm}{m}{n}
\rput(6.502344,2.1889062){$e_2$}
\usefont{T1}{ptm}{m}{n}
\rput(6.502344,-1.8710938){$e_3$}
\end{pspicture}
}
\caption{Dual bipolar fuzzy hypergraph}
\end{figure}
\begin{table}[!h]
\caption{The corresponding incidence matrix of $H^D$ is given below: }
\centering
\medskip
\begin{tabular}{l|cccc}
$M_{H^D}$& $A$&$B$ &$C$ & $D$\\
 \hline
$e_1$ &$(0.5, -0.3)$ & $(0.4, -0.2)$& (0, 0)&(0, 0) \\
$e_2$&(0, 0)& $(0.4, -0.2)$& $(0.3, -0.6)$& (0, 0)\\
$e_3$& (0, 0)&(0, 0)& $(0.3, -0.6)$& $(0.5, -0.1)$\\
$e_4$& $(0.5, -0.3)$&(0, 0)&(0, 0) & $(0.5, -0.1)$\\
\end{tabular}
\end{table}
\end{example}
We see that some edges contain only vertices having high positive membership
degree and high negative membership degree. We define  here the concept of strength of an edge.
\begin{definition}\label{D2}
The {\it strength} $\eta$ of an edge $E$ is the maximum positive membership $\mu^P(x)$ of
vertices  and maximum negative membership $\mu^N(x)$ of vertices in the edge $E$. That is,
$\eta(E_j)$ = $\{\max ( \mu^P_j(x)~|~ \mu^P_j(x)>0 ) ~{\rm ,}~ \max (\mu^N_j(x) ~|~ \mu^N_j(x) <0) \}$.
\end{definition}
Its interpretation is that the edge $E_j$ groups elements having
participation degree at least $\eta(E_j)$ in the hypergraph.

\begin{example}
Consider a bipolar fuzzy hypergraph $H = (V,E)$ such that $V = \{ a, b, c, d\}$,
$E = \{E_1, E_2, E_3, E_4 \}$.
\begin{figure}[!h]
\centering

\scalebox{1} {
\begin{pspicture}(0,-2.4589062)(7.1028123,2.4189062)
\psellipse[linewidth=0.04,dimen=outer](5.2609377,0.14890625)(0.64,2.25)
\psellipse[linewidth=0.04,dimen=outer](1.6909375,0.14890625)(0.61,2.27)
\psellipse[linewidth=0.04,dimen=outer](3.4909375,1.54310626)(2.53,0.49)
\psellipse[linewidth=0.04,dimen=outer](3.5309374,-1.5410937)(2.43,0.42)
\psdots[dotsize=0.12](1.5609375,1.54310626)
\psdots[dotsize=0.12](5.3609376,1.54310626)
\psdots[dotsize=0.12](5.3009377,-1.5610938)
\psdots[dotsize=0.12](1.5409375,-1.6010938)
\usefont{T1}{ptm}{m}{n}
\rput(3.7723436,-2.2310936){$E_3$}
\usefont{T1}{ptm}{m}{n}
\rput(6.592344,0.00890625){$E_2$}
\usefont{T1}{ptm}{m}{n}
\rput(0.43234375,0.10890625){$E_4$}
\usefont{T1}{ptm}{m}{n}
\rput(0.3423437,1.8089062){$x_1$ $(0.5, -0.2)$}
\usefont{T1}{ptm}{m}{n}
\rput(0.12234375,-1.6310937){$x_4$ $(0.4, -0.5)$}
\usefont{T1}{ptm}{m}{n}
\rput(3.6523438,2.2289062){$E_1$}
\usefont{T1}{ptm}{m}{n}
\rput(6.502344,2.1889062){$x_2$ $(0.6, -0.1)$}
\usefont{T1}{ptm}{m}{n}
\rput(6.502344,-1.8710938){$x_3$ $(0.5, -0.4)$}
\end{pspicture}
}
\caption{Bipolar fuzzy hypergraph}
\end{figure}
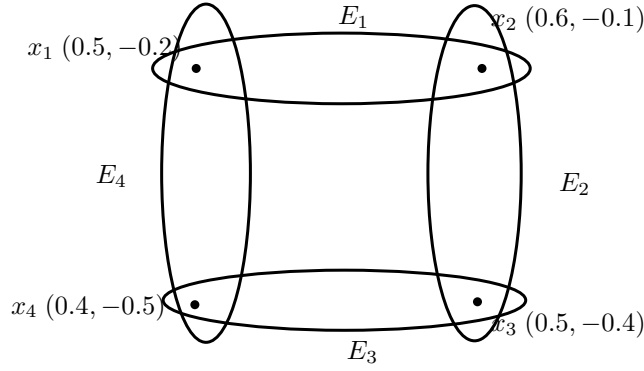
\end{example}
It is easy to see that $E_1$ is strong than $E_3$, and $E_2$ is strong than $E_4$. We call the edges with high strength the strong edges because
the cohesion in them is strong.

\begin{definition}
The {\it $(\alpha, \beta)-$cut of a bipolar fuzzy hypergraph} $H$, denoted by $H_{(\alpha, \beta)}$, is defined as an ordered pair
\[
H_{(\alpha, \beta)}=(V_{(\alpha, \beta)}, E_{(\alpha, \beta)}),
\]
where
\begin{itemize}
  \item [\rm(i)]  $V_{(\alpha, \beta)}=\{x_1, x_2, \ldots, x_n  \}=V,$
  \item [\rm(ii)]$E_{j(\alpha, \beta)}=\{x_i~|~\mu^P_j(x_i)\geq \alpha ~ {\rm  and} ~\mu^N_j(x_i)\leq \beta, ~j=1, 2,3,\ldots, m  \}$,
  \item [\rm(iii)] $E_{m+1(\alpha, \beta)}=\{x_i~|~\mu^P_j(x_i) < \alpha ~ {\rm  and} ~\mu^N_j(x_i) > \beta, ~ \forall ~j  \}.$
\end{itemize}
The edge $E_{m+1(\alpha, \beta)}$ is added to the group of elements which are not contained
in any edge $E_{j(\alpha, \beta)}$ of $H_{(\alpha, \beta)}.$  The edges in the                                $(\alpha, \beta)-$cut  hypergraph are now crisp sets.
\end{definition}
\begin{example}
Consider the bipolar fuzzy hypergraph $H=(V, E)$, where $V=\{x, y, z\}$ and $E=\{E_1, E_2\},$ which is represented by the following incidence matrix:
\begin{table}[!h]
\caption{Incidence matrix of $H$ }
\centering
\medskip
\begin{tabular}{l|cc}

$H$& $E_1$&$E_2$ \\
 \hline
$x$ & $(0.4, -0.2)$ & (0, 0) \\
$y$& $(0.5, -0.3) $& $(0.6, -0.2)$\\
$z$& (0, 0)& $(0.2, -0.05)$\\
\end{tabular}
\end{table}
\newline
From this matrix  we understand that, for example, $E_1=(\mu^P_1, \mu^N_1): V \to [0, 1] \times [-1, 0]$ satisfies:
\[
\mu^P_1(x)=0.4, \ \mu^N_1(x)=-0.2; \ ~ \mu^P_1(y)=0.5, \ \mu^N_1(y)=-0.3; \ ~ \mu^P_1(z)=0, \ \mu^N_1(z)=0.
\]
$(0.3, -0.1)$-cut of  bipolar fuzzy hypergraph $H$ is
\[E_{1(0.3, -0.1)}=\{x, y\},~ E_{2(0.3, -0.1)}=\{y\},~ E_{3(0.3, -0.1)}=\{z\}. \]
The incidence matrix of $H_{(0.3, -0.1)}$ is given below.
\begin{table}[!h]
\caption{Incidence matrix of $H_{(0.3, -0.1)}$ }
\centering
\medskip
\begin{tabular}{l|ccc}

$H_{(0.3, -0.1)}$& $E_{1(0.3, -0.1)}$&$E_{2(0.3, -0.1)}$ & $E_{3(0.3, -0.1)}$\\
 \hline
$x$ &1 &  0 & 0 \\
$y$&1&1 & 0\\
$z$&  0&0 & 1\\
\end{tabular}
\end{table}
\end{example}
\begin{definition}
A bipolar fuzzy hypergraph $H=(V, E)$ is
called a $A=(\mu^P_A, \mu^N_A)$-tempered bipolar fuzzy hypergraph of $H=(V, E)$ if there is
a crisp hypergraph $H^*=(V, E^*)$ and a bipolar fuzzy set
$A=(\mu^P_A, \mu^N_A): V \to [0, 1] \times [-1, 0]$ such that $E=\{B_F=(\mu^P_{B_F}, \mu^N_{B_F})~|~ F \in E^*\}$, where

\[
\begin{array}{llll}
\mu^P_{B_F}(x)=\left\{
\begin{array}{lll}
\min (\mu^P_A(y)~|~ y \in F) & {\rm if }~  x \in F, \\[2pt]
0& {\rm  otherwise },
 \end{array} \right.
 \end{array}~~~~~
\begin{array}{llll}
\mu^N_{B_F}(x)=\left\{
\begin{array}{lll}
\max (\mu^N_A(y)~|~ y \in F) & {\rm if }~  x \in F, \\[2pt]
-1& {\rm  otherwise }.
 \end{array} \right.
 \end{array}\]

Let  $A \otimes H$ denote the $A$-tempered bipolar fuzzy hypergraph
of $H$ determined by the crisp hypergraph
$H = (V, E^*)$ and the bipolar fuzzy set $ A:V  \to [0, 1] \times [-1, 0]$.
\end{definition}
\begin{example}
Consider the bipolar fuzzy hypergraph $H=(V, E)$, where $V=\{a, b, c, d\}$ and $E=\{E_1, E_2, E_3, E_4\}$ which is represented by the following incidence matrix:
\begin{table}[!h]
\caption{Incidence matrix of $H$}
\centering
\medskip
\begin{tabular}{l|cccc}
$H$& $E_1$&$E_2$ & $E_3$ & $E_4$  \\
 \hline
$a$ & $(0.2, -0.7)$ & (0, 0) & (0, 0) & $(0.2, -0.7)$ \\
$b$& $(0.2, -0.7)$ & $(0.3, -0.4)$ & $(0.0, -0.9)$ & (0, 0)\\
$c$& (0, 0)& (0, 0) & $(0, -0.9)$ & $(0.2, -0.7)$ \\
$d$& (0, 0)& $(0.3, -0.4)$ & (0, 0) &(0, 0) \\
\end{tabular}
\end{table}
\newline
Then
\[ E_{(0, -0.1)}=\{ \{ b, c\}   \},~  E_{(0.2, -0.7)}=\{ \{a, b\}, \{a, c\}, \{ b, c\}\}.\]
\[
E_{(0.3, -0.1)}=\{\{a, b\}, \{a, c\}, \{ b, c\}, \{ b,  d\}  \}.
\]
Define $A=(\mu^P_A, \mu^N_A): V \to [0, 1] \times [-1, 0]$ by
\[  \mu^P_A(a)=0.2, ~\mu^P_A(b)=\mu^P_A(c)=0.0,~ \mu^P_A(d)=0.3,  \mu^N_A(a)=-0.7, ~\mu^N_A(b)=\mu^N_A(c)=-0.9,~ \mu^N_A(d)=-0.4.\]
Note that
\[ \mu^P_{B_{\{a, b\}}}(a)=\min (\mu^P_A(a), \mu^P_A(b))=0.0, ~ \mu^P_{B_{\{a, b\}}}(b)=\min (\mu^P_A(a), \mu^P_A(b))=0.0,
 \mu^P_{B_{\{a, b\}}}(c)=0.0, ~ \mu^P_{B_{\{a, b\}}}(d)=0.0,\]
\[ \mu^N_{B_{\{a, b\}}}(a)=\max (\mu^N_A(a), \mu^N_A(b))=-0.9, ~ \mu^N_{B_{\{a, b\}}}(b)=\max (\mu^N_A(a), \mu^N_A(b))=-0.9, \mu^N_{B_{\{a, b\}}}(c)=-1, ~ \mu^N_{B_{\{a, b\}}}(d)=-1.\]
Thus \[E_1=(\mu^P_{B_{\{a, b\}}}, \mu^P_{B_{\{a, b\}}}), ~~E_2=(\mu^P_{B_{\{b, d\}}}, \mu^N_{B_{\{b, d\}}}),~
E_3=(\mu^P_{B_{\{b, c\}}}, \mu^N_{B_{\{ b, c\}}}), ~~E_4=(\mu^P_{B_{\{a, c\}}}, \mu^N_{B_{\{a, c\}}}).\]
Hence $H$ is $A$-tempered  hypergraph.
\end{example}

\begin{theorem}
A bipolar fuzzy hypergraph $H$ is a $A=(\mu^P_A, \mu^N_A)$-tempered bipolar fuzzy hypergraph  of some crisp
hypergraph $H^*$ if and only if $H$ is elementary, support simple and simply ordered.
\end{theorem}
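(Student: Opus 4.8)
The plan is to prove the two implications separately, mirroring the classical characterization of tempered fuzzy hypergraphs, and to carry the negative membership through every step in parallel with the positive one: minimum against maximum, height $h(A)$ against depth $d(A)$, and each positive threshold $s_i$ paired with its negative partner $r_i$ from the fundamental sequence.

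For the forward direction I would assume $H = A \otimes H^{*}$ for a crisp hypergraph $H^{*} = (V, E^{*})$ and a bipolar fuzzy set $A$. Each edge $B_F$ with $F \in E^{*}$ takes, on every $x \in F$, the single pair of values $\left(\min_{y \in F}\mu^P_A(y),\ \max_{y\in F}\mu^N_A(y)\right)$, which depends only on $F$ and not on $x$; hence every edge is single-valued on its support $F$ and $H$ is \emph{elementary}. Support simplicity is then immediate, since two edges $B_F, B_{F'}$ with $\mathrm{supp}(B_F) = \mathrm{supp}(B_{F'})$ have $F = F'$ as common support and are therefore literally the same edge. The only real work here is \emph{simply ordered}: I would compute the level cuts, noting that $(B_F)_{(\alpha,\beta)} = F$ exactly when $\alpha \le \min_{y\in F}\mu^P_A(y)$ and $\beta \ge \max_{y\in F}\mu^N_A(y)$, so that the distinct pairs $\left(\min_F \mu^P_A,\ \max_F \mu^N_A\right)$ over $F \in E^{*}$ are precisely the entries of $F(H)$, and the core hypergraphs form a nested chain whose newly appearing edges each contain a vertex at which $A$ attains the current threshold, giving the simply-ordered condition.

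For the converse I would assume $H$ is elementary, support simple, and simply ordered, with fundamental sequence $F(H) = \{(s_1,r_1), \ldots, (s_n,r_n)\}$ and core set $C(H) = \{H_i = (V_i, E_i^{*})\}$; simply ordered guarantees the chain $H_1 \subseteq \cdots \subseteq H_n$ is nested and that any edge appearing at a new level is not contained in the previous vertex set. I would take $H^{*} = (V, E^{*})$ with $E^{*}$ the edge set of the largest core, and define $A$ by assigning to each vertex $x$ the threshold pair $(s_i, r_i)$ at the level where $x$ first enters the chain (that is, $x \in V_i \setminus V_{i-1}$), using the definition's prescription for vertices outside a given $F$. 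Verifying $H = A \otimes H^{*}$ then reduces to showing, for each edge $C$ of $H$ with support $F$ and constant height/depth $(s_j, r_j)$, that $\min_{y\in F}\mu^P_A(y) = s_j$ and $\max_{y\in F}\mu^N_A(y) = r_j$.

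The main obstacle is exactly this last pair of equalities, and it is where \emph{simply ordered} is indispensable. The inequalities $\min_{y\in F}\mu^P_A(y) \ge s_j$ and $\max_{y\in F}\mu^N_A(y) \le r_j$ follow because every $y \in F$ lies in $V_j$ and so entered the chain at a level no later than $j$. For the reverse inequalities I need a single vertex $y^{*} \in F$ with $(\mu^P_A(y^{*}), \mu^N_A(y^{*})) = (s_j, r_j)$, i.e.\ $y^{*} \in V_j \setminus V_{j-1}$: since $C$ is elementary with height/depth $(s_j,r_j)$, its support $F$ is an edge of the $j$-th core but of no earlier one, so $F \in E_j^{*} \setminus E_{j-1}^{*}$, and the simply-ordered hypothesis yields $F \not\subseteq V_{j-1}$, producing the required $y^{*}$. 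Care must be taken that the positive and negative thresholds are read off the \emph{same} level $(s_i,r_i)$ so that $A$ is well defined as a single bipolar value at each vertex, and that the ``otherwise'' values $(0,-1)$ prescribed by the definition are handled consistently with the support convention adopted for $B_F$; once these bookkeeping points are settled the two constructions are mutually inverse and the equivalence follows.
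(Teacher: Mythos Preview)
Your proposal is correct and follows essentially the same route as the paper: in the forward direction you observe that $A$-tempered edges are constant on their supports (hence elementary and support simple) and that a newly appearing edge at level $i+1$ must contain a vertex where $A$ attains the $(i{+}1)$-st threshold (hence simply ordered); in the converse you define $A$ by the first core level at which each vertex appears and use the simply-ordered hypothesis precisely to produce the witness $y^{*}\in F\cap(V_j\setminus V_{j-1})$ forcing $\min_{y\in F}\mu^P_A(y)=s_j$ and $\max_{y\in F}\mu^N_A(y)=r_j$. Your added remarks about reading both positive and negative thresholds from the same level and about the ``otherwise'' values $(0,-1)$ are sensible bookkeeping points that the paper glosses over.
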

\begin{proof}
Suppose that $H=(V, E)$ is a $A$-tempered bipolar fuzzy hypergraph  of some crisp
hypergraph $H^*$. Clearly, $H$ is elementary and support simple. We show that $H$ is simply ordered. Let
 \[C(H)=\{(H^*_1)^{r_1}=(V_1, E_1^*),~ (H^*_2)^{r_2}=(V_2, E_2^*), \cdots,~ (H^*_n)^{r_n}=(V_n, E_n^*)  \}.\]
 Since $H$ is elementary, it follows from Proposition 3.16 that $H$ is ordered. To show that $H$ is simply ordered, suppose that there exists $F \in E^*_{i+1}\setminus E^*_i$. Then there exists $x^* \in F$ such that $\mu^P_A(x^*)=r_{i+1}$, $\mu^N_A(x^*)=\acute{r}_{i+1}$. Since $\mu^P_A(x^*)=r_{i+1}< r_i$ and  $\mu^N_A(x^*)=\acute{r}_{i+1}< \acute{r}_i$, it follows that $x^* \notin V_i$ and $F\nsubseteq V_i$, hence $H$ is simply ordered.\\
 Conversely, suppose $H=(V, E)$ is elementary, support simple and simply ordered. Let
\[C(H)=\{(H^*_1)^{r_1}=(V_1, E_1^*),~ (H^*_2)^{r_2}=(V_2, E_2^*), \cdots,~ (H^*_n)^{r_n}=(V_n, E_n^*)  \}\]
where $D(H)=\{ r_1, r_2, \cdots, r_n\}$ with $0<r_n<\cdots <r_1$. Since $(H^*)^{r_n}=H^*_n=(V_n, E^*_n)$ and define
$A=(\mu^P_A, \mu^N_A):V_n \to [0, 1] \times [-1, 0]$  by

\[
\begin{array}{llll}
\mu^P_{A}(x)=\left\{
\begin{array}{lll}
r_1  &{if }~  x \in V_1, \\[2pt]
r_i & {if }~x \in V_i\setminus V_{i-1}, i=1, 2, \cdots, n
 \end{array} \right.
 \end{array}~~~~~~~
\begin{array}{llll}
\mu^N_{A}(x)=\left\{
\begin{array}{lll}
s_1  &{if }~  x \in V_1, \\[2pt]
s_i & {if }~x \in V_i\setminus V_{i-1}, i=1, 2, \cdots, n
 \end{array} \right.
 \end{array}\] We show that
$E=\{B_F=(\mu^P_{B_F}, \mu^N_{B_F})~|~ F \in E^*\}$, where

\[
\begin{array}{llll}
\mu^P_{B_F}(x)=\left\{
\begin{array}{lll}
\min (\mu^P_A(y)~|~ y \in F) & {\rm if }~  x \in F, \\[2pt]
0& {\rm  otherwise },
 \end{array} \right.
 \end{array}~~~~~~~~
\begin{array}{llll}
\mu^N_{B_F}(x)=\left\{
\begin{array}{lll}
\max (\mu^N_A(y)~|~ y \in F) & {\rm if }~  x \in F, \\[2pt]
-1& {\rm  otherwise }.
 \end{array} \right.
 \end{array}\]
 Let $F \in E^*_n$. Since $H$ is  elementary and support simple, there is a unique bipolar fuzzy edge $C_F=(\mu^P_{C_F}, \mu^N_{C_F})$ in $E$ having support $E^*$. Indeed, distinct edges in $E$ must have distinct supports that lie in  $E^*_n$. Thus, to show that $E=\{ B_F=(\mu^P_{B_F}, \mu^N_{B_F})~|~F \in E^*_n\}$,  it suffices to show that for each $F \in E^*_n$,
 $\mu^P_{C_F}=\mu^P_{B_F}$ and $\mu^N_{C_F} =\mu^N_{B_F}$. As all edges are elementary and different edges have different supports, it follows from the definition of fundamental sequence that $h(C_F)$ is equal to some number $r_i$ of $D(H)$. Consequently, $E^* \subseteq V_i $. Moreover, if $i>1$, then $F \in E^*\setminus E^*_{i-1}$. Since $F\subseteq  V_i$, it follows from
 the definition of $A=(\mu^P_A, \mu^N_A)$ that for each $x \in F$, $\mu^P_A(x)\geq r_i$ and $\mu^N_A(x)\leq s_i$. We claim that
 $\mu^P_A(x)= r_i$ and $\mu^N_A(x)= s_i$, for some $x\in F$. If not, then by definition of $A=(\mu^P_A, \mu^N_A)$, $\mu^P_A(x)\geq r_i$ and $\mu^N_A(x)\leq s_i$ for all $x \in F$ which implies that $F \subseteq  V_{i-1}$ and so $F \in E^*\setminus E^*_{i-1}$ and since $H$ is simply ordered $F\varsubsetneq V_{i-1}$, a contradiction. Thus it follows from the definition of $B_F$ that $B_F=C_F$. This completes the proof.
\end{proof}
As a consequence of the above theorem we obtain.
\begin{proposition}
Suppose that $H$ is a simply ordered bipolar fuzzy hypergraph and $F(H)=\{r_1, r_2, \cdots, r_n \}$. If $H^{r_n}$ is a simple hypergraph, then there is a partial bipolar fuzzy hypergraph $\acute{H}$ of $H$ such that the following assertions hold:
\begin{itemize}
  \item[\rm (1)] $\acute{H}$ is a $A=(\mu^P_A, \mu^N_A)$-tempered bipolar fuzzy hypergraph  of $H_n$.
  \item [\rm (2)]$E \sqsubseteq \acute{E}$.
  \item [\rm (3)] $F(\acute{H})=F(H)$  and $C(\acute{H})=C(H).$
\end{itemize}
\end{proposition}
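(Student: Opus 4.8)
The plan is to construct $\acute H$ explicitly as the $A$-tempered bipolar fuzzy hypergraph sitting on the bottom core, and then to verify the three assertions, the identification of the core sets being the only substantial step. First I would fix the core data of $H$. Writing $C(H)=\{(V_1,E^*_1),\dots,(V_n,E^*_n)\}$ with $0<r_n<\cdots<r_1$ and associated negative levels $s_1>\cdots>s_n$, and using that simple orderedness forces the cores to be nested, so $V_1\subseteq\cdots\subseteq V_n=V$ and $E^*_1\subseteq\cdots\subseteq E^*_n$. I would then define the bipolar fuzzy vertex set $A=(\mu^P_A,\mu^N_A)$ on $V$ exactly as in the proof of the preceding theorem: for $x\in V_i\setminus V_{i-1}$ (with $V_0=\emptyset$) put $\mu^P_A(x)=r_i$ and $\mu^N_A(x)=s_i$, and set $\acute H:=A\otimes H_n$, the $A$-tempered bipolar fuzzy hypergraph determined by the crisp hypergraph $H_n=(V,E^*_n)$, so that $\acute E=\{B_F\mid F\in E^*_n\}$ with the usual min/max tempering formulas. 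Assertion (1) is then immediate, since $\acute H$ is $A$-tempered of $H_n$ by construction, and by the preceding theorem it is automatically elementary, support simple and simply ordered.

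Next I would record that the bottom core is the support hypergraph. Because the bottom level $(r_n,s_n)$ is the smallest entry of the fundamental sequence, every vertex of $\operatorname{supp}(C)$ already satisfies $\mu^P_C(x)\ge r_n$ or $\mu^N_C(x)\le s_n$, so $C_{(r_n,s_n)}=\operatorname{supp}(C)$ and therefore $E^*_n=\{\operatorname{supp}(C)\mid C\in E\}$. Since $\operatorname{supp}(B_F)=F$ for each $F\in E^*_n$, the support family of $\acute E$ is precisely $E^*_n$, which coincides with the support family of $E$; in particular every edge of $E$ is absorbed by an edge of $\acute E$, which gives (2). The hypothesis that $H^{r_n}=H_n$ is simple guarantees that these supports are pairwise distinct and non-nested, so $\acute E$ has no repeated edges and $\acute H$ is a genuine simple-support tempered hypergraph.

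The heart of the argument is (3), where I would compute the $(r_i,s_i)$-level cut of $\acute H$. Each $B_F$ is elementary with constant positive value $\min\{\mu^P_A(y)\mid y\in F\}$ and constant negative value $\max\{\mu^N_A(y)\mid y\in F\}$ on $F$, and since $A$ assigns the matching pair $(r_i,s_i)$ on each shell $V_i\setminus V_{i-1}$, the two cut conditions $\mu^P_A(y)\ge r_i$ and $\mu^N_A(y)\le s_i$ are both equivalent to $y\in V_i$. Hence $(B_F)_{(r_i,s_i)}$ equals $F$ when $F\subseteq V_i$ and is empty otherwise, so $\acute E_{(r_i,s_i)}=\{F\in E^*_n\mid F\subseteq V_i\}$. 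I would then prove $\{F\in E^*_n\mid F\subseteq V_i\}=E^*_i$: the inclusion $\supseteq$ is just the nesting $E^*_i\subseteq E^*_n$ together with $F\subseteq V_i$ for $F\in E^*_i$, while the reverse inclusion is where the simply ordered hypothesis enters, for if $F\in E^*_n$ had least appearance index $k>i$, then $F\in E^*_k\setminus E^*_{k-1}$, so simple orderedness would force $F\nsubseteq V_{k-1}\supseteq V_i$, contradicting $F\subseteq V_i$. Thus $\acute H_{(r_i,s_i)}=(V_i,E^*_i)=H_i$ for every $i$, whence $C(\acute H)=C(H)$ and, the fundamental sequence being read off these core transitions, $F(\acute H)=F(H)$.

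I expect the reverse inclusion in the core identification to be the main obstacle and the one place where ``simply ordered'' is indispensable: without it a bottom-core edge contained in $V_i$ could fail to lie in $E^*_i$, and the level cut of $\acute H$ at stage $i$ would acquire spurious edges, destroying $C(\acute H)=C(H)$. A secondary point requiring care is the alignment of thresholds, namely keeping the positive condition $\mu^P_A(y)\ge r_i$ and the negative condition $\mu^N_A(y)\le s_i$ perfectly synchronized (both equivalent to $y\in V_i$), which is exactly why $A$ is defined to assign $(r_i,s_i)$ simultaneously on each shell; and the simplicity of $H^{r_n}$ is what ensures no two constructed edges share a support, so that the core counts, and not merely the core supports, are preserved.
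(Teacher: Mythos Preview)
The paper does not actually prove this proposition: it is stated immediately after Theorem~3.26 with the single sentence ``As a consequence of the above theorem we obtain,'' and no further argument is given. Your proposal supplies precisely the details the paper omits, and it does so by reusing the construction from the proof of that theorem --- defining $A$ by $(\mu^P_A,\mu^N_A)=(r_i,s_i)$ on each shell $V_i\setminus V_{i-1}$ and taking $\acute H=A\otimes H_n$ --- which is clearly the intended route. Your verification of (3), and in particular the use of the simply ordered hypothesis to rule out a bottom-core edge $F\subseteq V_i$ with first appearance index $k>i$, is the right idea and the place where that hypothesis is genuinely needed. So your approach matches the paper's (implicit) one and is more complete than what the paper records.

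One small caution on a technicality internal to the paper: with the definition of $B_F$ given there, $\mu^N_{B_F}(x)=-1$ for $x\notin F$, so strictly speaking $\operatorname{supp}^N(B_F)=V$ and $\operatorname{supp}(B_F)\ne F$. The paper's own worked example is inconsistent on this point, and the intended reading (and the one needed for the theorem and this corollary to make sense) is that the ``otherwise'' value should be $0$ rather than $-1$. Your argument for (2) tacitly uses this intended reading; that is fine, but it is worth flagging rather than silently assuming.
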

\section{Application examples of  bipolar  fuzzy hypergraphs}
 
\begin{definition} Let $X$ be a reference set. Then, a family of nontrivial bipolar fuzzy sets $\{A_1, A_2, A_3, \ldots, A_m \}$  where $A_i=(\mu^P_i, \mu^N_i)$ is a {\it bipolar  fuzzy partition} if
\begin{itemize}
  \item [\rm (1)]  $\bigcup_i {\rm supp}(A_i)=X$, ~~ $i=1, 2, \ldots, m$,
    \item [\rm (2)] $\sum^m_{i=1} \mu^P_i(x)=1$  for all $x \in X,$
    \item [\rm (3)] $\sum^m_{i=1} \mu^N_i(x)=-1$  for all $x \in X.$
\end{itemize}
\end{definition}
Note that this definition generalizes fuzzy partitions because
the definition is equivalent to a fuzzy partition when
for all $x$,  $\nu_i(x)$ =0.  We call a family $\{A_1, A_2, A_3, \ldots, A_m \}$ a {\it bipolar fuzzy covering} of $X$ if it satisfies above conditions $(1)-(3)$.

A bipolar fuzzy partition can be represented by a bipolar fuzzy matrix $[a_{ij}]$ where
$a_{ij}$, is the positive membership degree and negative membership degree of element $x_i$ in class $j.$ We see that the
matrix is the same as the incidence matrix in bipolar fuzzy hypergraph. Then
we can represent a bipolar  fuzzy partition by a bipolar  fuzzy hypergraph  $H = (V, E)$
such that\\
$(1)$ \  $V$: a set of elements $x_i$, $i$ = 1, $\ldots$, $n$\\
$(2)$ \ $E=\{E_1, E_2, \ldots, E_m\}$= a set of nontrivial bipolar fuzzy classes, \\
$(3)$ \ $V = \bigcup_{j}  {\rm supp}(E_j),~~ j=1, 2, \ldots, m,$\\
$(4)$ \   $\sum^m_{i=1} \mu^P_i(x)=1$   for all $x \in X$,\\
$(5)$ $\sum^m_{i=1} \mu^N_i(x)=-1$   for all $x \in X$.

Note that conditions (4) - (5) are added to the
bipolar  fuzzy hypergraph for bipolar  fuzzy partition. If these conditions are added, the
bipolar  fuzzy hypergraph can represent a bipolar  fuzzy covering. Naturally, we can
apply the $(\alpha, \beta)$-cut to the bipolar  fuzzy partition.\\

\begin{example}[ Radio coverage network]
In telecommunications, the coverage of a radio station is the geographic area where the station can communicate.\\
 Let $V$ be a finite set of radio
receivers (vertices); perhaps a set of representative locations at the
centroid of a geographic region. For each of $m$ radio transmitters we define the bipolar fuzzy set ``listening area of station $j$" where $A_j(x)=(\mu^P_{Aj}(x), \mu^N_{Aj}(x))$ represents
the ``quality of reception of station $j$ at location $x$." Positive membership  value near to $1$,  could signify
``very clear reception on a  very sensitive
radio"  while  negative membership  value
near to $-1$,  could signify  ``very poor reception on  a very poor radio". Since graphy affects signal strength, each ``listening area" is an bipolar fuzzy set. Also, for a fixed radio the reception will vary between
different stations.  Thus this model  uses the full definition of an bipolar
fuzzy hypergraph. The model could be used to determine station programming
or marketing strategies  or  to establish an emergency broadcast network. Further
variables could relate signal strength to changes in time of day, weather
and other conditions.\end{example}
\begin{example}[Clustering problem]
We consider here the clustering problem, which is a typical example of a bipolar  fuzzy partition on the digital image processing.\\
There are five objects and they are classified into two classes: tank
and house. To cluster the elements $x_1$, $x_2$, $x_3$, $x_4$, $x_5$ into $A_t$ (tank) and
$B_h$ (house), a bipolar fuzzy partition matrix is given as the form of incidence
matrix of bipolar fuzzy hypergraph.

 \begin{table}[!h]
\caption{Bipolar fuzzy partition matrix }
\centering
\medskip
\begin{tabular}{l|cc}
$H$& $A_t$ & $B_h$ \\
 \hline
$x_1$ & $(0.96, -0.04)$ & $(0.04, -0.96)$  \\
$x_2$&($0.95, -0.5)$& $(0.5, -0.95)$ \\
$x_3$&  $(0.61, -0.39)$& $(0.39, -0.61)$ \\
$x_4$&  $(0.05, -0.95)$& $(0.95, -0.05)$ \\
$x_5$&  $(0.03, -0.97)$& $(0.97, -0.03)$ \\
\end{tabular}
\end{table}
 We can apply the $(\alpha, \beta)$-cut to the hypergraph and obtain a hypergraph $H_{(\alpha, \beta)}$ which is not bipolar fuzzy hypergraph.  We denote the edge (class) in
$(\alpha, \beta)$-cut hypergraph $H_{(\alpha, \beta)}$ as $E_{j(\alpha, \beta)}$. This hypergraph $H$, represents generally the covering because the conditions: $(4)$~ $\sum^m_{i=1} \mu^P_i(x)=1$ for all $x \in X$, $(5)$$\sum^m_{i=1} \mu^N_i(x)=-1$ for all $x \in X$, is not
always guaranteed. The hypergraph $H_{(0.61, -0.03 )}$ is shown in Table $8$.
\begin{table}[!h]
\caption{ Hypergraph $H_{(0.61, -0.03)}$ }
\centering
\medskip
\begin{tabular}{c|cc}
$H_{(0.61, -0.03)}$& $A_{t(0.61, -0.03)}$&$B_{h(0.61, -0.03)}$\\
 \hline
$x_1$ &1 &  0 \\
$x_2$&1&0 \\
$x_3$&  1&0 \\
$x_4$&  0&1 \\
$x_5$&  0&1 \\
\end{tabular}
\end{table}

We  obtain dual bipolar fuzzy hypergraph $H^D_{(0.61, -0.03)}$ of $H_{(0.61, -0.03)}$ which is given in Table $9$.
\begin{table}[!h]
\caption{Dual bipolar fuzzy hypergraph}
\centering
\medskip

\begin{tabular}{c|ccccc}
$H^D(0.61, -0.03)$& $X_1$&$X_2$ &$X_3$ & $X_4$ & $X_5$\\
 \hline
$A_t$ &1 & 1& 1&0 & 0 \\
$B_h$& 0&0& 0& 1 & 1\\
\end{tabular}
\end{table}

We consider the strength of edge (class) $E_{j(\alpha,\beta)}$, or in the $(\alpha, \beta)$-cut
hypergraph $H_{(\alpha, \beta)}$. It is necessary to apply Definition \ref{D2}
to obtain the strength of edge $E_{j(\alpha, \beta)}$ in $H_{(\alpha,\beta )}$.\\
The possible interpretations of $\eta(E_{j(\alpha,\beta)})$ are:
\begin{itemize}
\item the edge (class) in the hypergraph (partition) $H_{(\alpha, \beta)}$, groups elements
having at least $\eta$ positive and negative memberships,
\item the strength (cohesion) of edge (class) $E_{j(\alpha, \beta)}$  in $H_{(\alpha, \beta)}$ is $\eta$.
  \end{itemize}

 Thus we can use the strength as a measure of the cohesion or
strength of a class in a partition. For example, the strengths of classes $A_t(0.61, -0.03)$ and $B_h(0.61, -0.03)$ at $s$=0.61, $t$=$-0.03$ are $\eta(A_t(0.61, -0.03))$=$(0.96, -0.04)$, $\eta(B_h(0.61, -0.03))$=$(0.97, -0.03).$ Thus we say that the class $\eta(B_h(0.61, -0.03))$ is stronger than $\eta(A_t(0.61, -0.03))$ because $\eta(B_h(0.61, -0.03)) >\eta(A_t(0.61, -0.03))$. From the above discussion on the hypergraph
$H_{(0.61, -0.03)}$ and $H^D_{(0.61, -0.03)}$, we can state that:

\begin{itemize}
  \item The bipolar fuzzy hypergraph can represent the fuzzy partition visually.
The $(\alpha, \beta)$-cut hypergraph also represents the $(\alpha, \beta)$-cut partition.
    \item The dual hypergraph $H^D_{(0.61, -0.03)}$ can represent elements $X_i$, which can
be grouped into a class $E_{j(\alpha, \beta)}$. For example, the edges $X_1$, $X_2$, $X_3$ of the dual hypergraph in Table 9 represent that the elements $x_1$, $x_2$, $x_3$  that can be grouped into $A_t$ at level (0.61, -0.03).

\item At $(\alpha, \beta)$=(0.61, -0.03) level, the strength of class $B_h(0.61, -0.03)$ is the highest
(0.95, -0.05), so it is the strongest class. It means that this class can
be grouped independently from the other parts. Thus we can
eliminate the class $B_h$ from the others and continue clustering.
Therefore, the discrimination of strong classes from the others
can allow us to decompose a clustering problem into smaller
ones. This strategy allows us to work with the reduced data in
a clustering problem.
\end{itemize}
\end{example}

\end{document}